\newtheorem{theorem}{Theorem}[section]
\newtheorem{lemma}[theorem]{Lemma}
\newtheorem{proposition}[theorem]{Proposition}
\newtheorem{corollary}[theorem]{Corollary}
\theoremstyle{definition}
\newtheorem{definition}[theorem]{Definition}
\theoremstyle{remark}
\newtheorem{remark}[theorem]{Remark}
\numberwithin{equation}{section}
\newcommand{\abs}[1]{\lvert#1\rvert}
\newcommand{\inn}{~ \hat{\in}~ }
\newcommand{\set}[1]{\lbrace#1\rbrace}
\newcommand{\C}{\mathcal{C}}
\newcommand{\CC}{\mathbb{C}}
\newcommand{\CV}{\mathfrak{C}}
\newcommand{\DV}{\mathfrak{D}}
\newcommand{\M}{\mathbb{M}}
\newcommand{\B}{\mathbb{B}}
\newcommand{\I}{\mathfrak{I}}
\newcommand{\E}{\mathfrak{X}}
\newcommand{\N}{\mathbb{N}}
\newcommand{\LL}{\mathbb{L}}
\newcommand{\p}{\mathbb{P}}
\newcommand{\mc}[1]{\mathcal{#1}}
\newcommand{\In}[1]{\mathrm{Int}[#1]}
\newcommand{\Ex}[1]{\mathrm{Ext}[#1]}
\begin{document}

\title{Complex circles of partition and the expansion principles}

\author{B. Gensel}
\address{Berndt Gensel: Spittal on Drau, Austria}
\email{dr.berndt@gensel.at}
\author{T. Agama}
\address{Theophilus Agama: Department of Mathematics, African Institute for mathematical sciences, Ghana.}
\email{Theophilus@aims.edu.gh/emperordagama@yahoo.com}

\subjclass[2010]{Primary 11P81; Secondary 05A17}

\date{\today}

\begin{abstract}
In this paper, we further develop the theory of circles of partition by introducing the notion of complex circles of partition. This work generalizes the classical framework, extending from subsets of positive integers as base sets to partitions defined within the complex plane, which now serves as both the base and bearing sets. We use the \emph{expansion principles} as central tools for rigorously investigating the possibility of partitioning numbers with base set as a certain subset of the complex plane.
\end{abstract}

\maketitle

\begingroup
  \setlength{\parskip}{6pt} 
  \tableofcontents
\endgroup


\section{Introduction}

The problem of additive representation lies at the intersection of number theory, combinatorics, and geometry. Classical questions such as Goldbach-type problems ask when an integer can be written as a sum of two elements from a prescribed set. They have inspired a large body of work that ranges from the Hardy--Littlewood circle method to modern additive combinatorics and the geometry of sumsets \cite{Nathanson,TaoVu,Green}. The present paper continues a geometric approach to such questions by developing a complex-plane analog of the circle-of-partition framework introduced in our earlier work \cite{CoP}.\\

The guiding idea behind the circles of partition is to encode an additive decomposition
$$
n=x+(n-x)
$$
with both summands drawn from a specified base set. In the original setting, each admissible summand is represented by a weighted point, and the complementary relation between $x$ and $n-x$ is captured by an axis that joins the corresponding points. This simple encoding has proven to be surprisingly flexible: it provides a way to track representation functions, to compare different generators, and to translate additive constraints into geometric relations among points and chords \cite{CoP}. Subsequent developments refined this perspective and led to a formulation of the squeeze principle to study the existence of additional admissible generators in Goldbach-type settings \cite{Goldbach,Squeeze}.\\

The main contribution of this paper is to extend this framework from a real number line and set-theoretic model to a complex geometric one. Instead of working only with subsets of the natural numbers, we consider the family
$$
\mathbb{C}_{\mathbb{M}}:=\{z=x+iy\in\mathbb{C}:x\in\mathbb{M},\ y\in\mathbb{R}\},
$$
where the real part is constrained by a chosen base set $\mathbb{M}\subseteq\mathbb{N}$. A complex circle of partition (cCoP) is then defined by imposing the circle condition
$$
\Im(z)^2=\Re(z)\bigl(n-\Re(z)\bigr).
$$
This condition is not merely formal: it ensures that every admissible point lies on a Euclidean circle in the complex plane with center $n/2$ on the real axis and diameter $n$. The resulting embedding circle provides a geometric envelope for the whole structure, while axis partners, conjugate points, and conjugate axes allow the real and imaginary contributions to be separated and compared in a precise way. In this sense, the complex framework retains the additive meaning of the original construction while adding a richer two-dimensional geometry.\\

A major theme of the paper is the interaction between cCoPs with different generators. We introduce interior and exterior regions relative to the embedding circle and show how these regions behave monotonically as the generator varies. This leads to a geometric comparison principle: smaller generators occupy nested interior regions, whereas larger generators push corresponding exterior regions outward. Such inclusions make it possible to compare different cCoPs without reverting to a point-by-point analysis. We also derive explicit chord-length formulas and describe how the diameter of the embedding circle plays the role of a universal axis.\\

These geometric comparisons culminate in the expansion principles. Given two nonempty cCoPs with related generators, the equality principle identifies a new admissible cCoP when the relevant axes align exactly; the forecast principle produces a larger generator; and the squeeze principle produces an intermediate one. The central theme is that information from two known cCoP can be transferred to a third cCoP whose generator lies between or beyond the original two. This is the mechanism that makes the framework useful for partition problems in restricted sets such as the primes, and is the reason the paper keeps the base set flexible throughout.\\

The present work naturally fits into the current literature on additive number theory and additive combinatorics. On the one hand, it is inspired by the classical study of the structure of sumset and inverse problems \cite{Nathanson,TaoVu}. On the other hand, it continues the program initiated in our earlier circle-of-partition papers, where additive representations were recast in geometric language and used to study Goldbach-type questions and related density problems \cite{CoP,Goldbach,Squeeze}. The purpose of this paper is to show that the same viewpoint can be lifted to the complex plane in a way that preserves the additive meaning of the original model while revealing a new geometric structure.

\subsection{Organization of the paper} The paper is organized as follows: Section~2 recalls the basic theory of circles of partition and the notation needed later. Section~3 introduces complex circles of partition, proves that their points lie on a unique embedding circle, and develops the associated chord geometry. Section~4 studies the interior and exterior points and their relation to the embedding circle. Section~5 contains the expansion principles, including the \emph{equality}, \emph{forecast}, and \emph{squeeze principles}, together with their specializations to distinguished subsets. Section~6 finally develops the limiting axes for the expansion principles.

\section{Background}

In this section, we gather the foundational background of the theory of circle of partition and related embedding method developed on the real numbers.

\begin{definition}\label{major}
Let $n\in \N$ and $\M\subseteq \N$. We denote the \emph{circle of partition} generated by $n$ with respect to the subset $\M$ by the set
\begin{align}
\C(n,\M)=\left\{[x]\mid x,n-x\in \M\right\}.\nonumber
\end{align}
We will abbreviate this as \textbf{CoP}. We call the elements of $\C(n,\M)$ points and denote them by $[x]$. In the special case where $\M = \N$, we denote CoP simply as $\C(n)$. We define $\Vert[x]\Vert := x$ as the \emph{weight} of the point $[x]$. Similarly, we denote the weight set of the points in CoP $\C(n,\M)$ by $\Vert\C(n,\M)\Vert$. Clearly, we have
\begin{align}\label{E_C(n)}
\Vert\C(n)\Vert=\lbrace 1,2,\ldots, n-1\rbrace.
\end{align}
\end{definition}
\bigskip

\begin{definition}\label{axis}
We say that $\LL_{[x],[y]}$ is an axis of the CoP $\C(n,\M)$ if and only if $x+y=n$. We say that $[y]$ is the axis partner of $[x]$, and vice versa. We do not distinguish between $\LL_{[x],[y]}$ and $\LL_{[y],[x]}$, as they represent the same axis. The point $[x]\in \C(n,\M)$ such that $2x=n$ is called the \emph{center} of CoP. If it exists, we call it a \emph{degenerate axis} $\LL_{[x]}$, in contrast to the \emph{real axes} $\LL_{[x],[y]}$. We denote the assignment of an axis $\LL_{[x],[y]}$ to a CoP $\C(n,\M)$ by
\[
\LL_{[x],[y]} \inn \C(n,\M),
\]
which implies $[x], [y] \in \C(n,\M)$ with $x+y=n$.
\end{definition}
\bigskip

From now on, we will focus solely on real axes. Thus, we will omit the term \emph{real} in the following sections.

\begin{proposition}\label{unique}
Each axis is uniquely determined by the points $[x]\in \C(n,\M)$.
\end{proposition}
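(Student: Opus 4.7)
The plan is to show that once a point $[x]\in\C(n,\M)$ is fixed, the equation defining an axis through $[x]$ admits only one solution for the partner point, so the axis $\LL_{[x],[y]}$ is uniquely determined by $[x]$.

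First, I would invoke Definition \ref{axis}: an axis $\LL_{[x],[y]} \inn \C(n,\M)$ requires both $[x],[y]\in\C(n,\M)$ and $x+y=n$. Fixing $[x]$, the linear relation $x+y=n$ has the unique solution $y=n-x$ in $\N$ (or the relevant ambient set). Thus any axis through $[x]$ must have $[n-x]$ as its partner point.

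Next, I would verify that $[n-x]$ actually belongs to $\C(n,\M)$, so that the axis exists as claimed. By Definition \ref{major}, membership $[x]\in\C(n,\M)$ is equivalent to $x,n-x\in\M$. Since the condition is symmetric in $x$ and $n-x$, the point $[n-x]$ also lies in $\C(n,\M)$. Hence the axis $\LL_{[x],[n-x]}$ exists and, by the first step, is the only axis containing $[x]$.

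Finally, I would note the convention from Definition \ref{axis} that $\LL_{[x],[y]}$ and $\LL_{[y],[x]}$ are identified as the same axis, so there is no ambiguity arising from reordering the endpoints. I don't foresee a serious obstacle here; the statement is essentially a restatement of the defining equation $x+y=n$ together with the symmetry of the membership condition. The only subtlety worth a sentence is ruling out the degenerate case $2x=n$, but since the section restricts attention to real axes, this is handled by Definition \ref{axis} and does not interfere with the uniqueness argument.
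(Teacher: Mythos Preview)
Your proposal is correct and follows essentially the same approach as the paper: both arguments fix a point $[x]$ and use the linear relation $x+y=n$ to force $y=n-x$, establishing uniqueness of the axis partner. The paper phrases this as a short contradiction argument (assuming two axes $\LL_{[x],[y]}$ and $\LL_{[x],[z]}$ with $z\neq y$ and deriving $y=z$), while you proceed directly and add the existence check and the remark on reordering; these are minor presentational differences, not a different route.
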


\begin{proof}
Suppose that $\LL_{[x],[y]}$ is an axis of CoP $\C(n,\M)$. Suppose also that $\LL_{[x],[z]}$ is an axis with $z\neq y$. By the definition \ref{axis}, we find that $n=x+y=x+z$, which implies $y=z$. This is inconsistent with the hypothesis.
\end{proof}

\begin{proposition}
Each point of a CoP $\C(n,\M)$, except for its center, has exactly one axis partner.
\end{proposition}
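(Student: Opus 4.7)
The plan is to separate the claim into an existence statement and a uniqueness statement, and tackle each from the definitions already in place.

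For existence, I would start from an arbitrary $[x]\in\C(n,\M)$ with $2x\neq n$, and produce a candidate partner by setting $y := n-x$. The task is to verify that $[y]\in\C(n,\M)$, which by Definition \ref{major} amounts to checking that both $y\in\M$ and $n-y\in\M$. But $y = n-x \in \M$ is exactly the second membership condition that $[x]\in\C(n,\M)$ gives, and $n-y = x \in \M$ is the first. So $[y]$ is a bona fide point of the CoP, and since $x+y = n$ by construction, Definition \ref{axis} says $\LL_{[x],[y]} \inn \C(n,\M)$. The non-center hypothesis $2x\neq n$ is used here to ensure $y \neq x$, so the axis in question is a real axis rather than a degenerate one.

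For uniqueness, I would invoke Proposition \ref{unique} directly: if $[y]$ and $[y']$ were two axis partners of $[x]$, then $\LL_{[x],[y]}$ and $\LL_{[x],[y']}$ would both be axes through $[x]$, forcing $y = y'$ by the cited proposition. Alternatively, the one-line argument $n = x+y = x+y' \Rightarrow y = y'$ reproduces the content of Proposition \ref{unique} and could be used inline.

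There is essentially no obstacle here; the content of the proposition is almost immediate from the symmetry $x \leftrightarrow n-x$ built into Definition \ref{major}. The only subtlety worth flagging is the role of the exception: the center $[x]$ with $2x = n$ is its own ``partner'' under the map $x\mapsto n-x$, and this is precisely why the statement excludes it, since the resulting axis is degenerate rather than a real axis in the sense fixed just before the proposition.
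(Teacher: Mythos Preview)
Your proposal is correct and follows essentially the same approach as the paper: existence via the definition of $\C(n,\M)$ (the paper phrases this by contradiction rather than your direct construction of $y=n-x$, but the content is identical), and uniqueness via Proposition~\ref{unique}.
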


\begin{proof}
Let $[x] \in \C(n,\M)$ be a point without an axis partner, assuming that $[x]$ is not the center of CoP. For every point $[y]$ and $[x]$ with $y\neq x$ and $y \in \M$, it implies
\[
x+y\neq n.
\]
This is inconsistent with the definition \ref{major}, since $[x] \in \C(n,\M)$. By Proposition \ref{unique}, the possibility of more than one axis partner is impossible.
\end{proof}

\paragraph*{\textbf{Notation}}
We denote the number of real axes of CoP $\C(n,\M)$ by 
\begin{align}\label{axesOfCoP}
\nu(n,\M):=\abs{\{\LL_{[x],[y]}\inn \C(n,\M)\}}.
\end{align}
It is evident that
\[
\nu(n,\M)=\left\lfloor\frac{k}{2}\right\rfloor
\]
if CoP $\C(n,\M)$ has $k$ members.
\bigskip

It is not clear whether the axes $\LL_{[x],[y]}\inn \C(n,\M)$ are lines that join points $[x]$ and $[y]$ on a CoP. In the present study, we will see that a transition from the base set $\M\subseteq \N$ into a certain subset $\CC_{\M}$ of the complex plane reveals this natural geometric feature of circles of partition.

\section{Complex Circles of Partition}

Here, we introduce a special subset of complex numbers to be used as the base set of CoPs.

\begin{definition}\label{D_cCoP}
Let $\M \subseteq \N$ and
\[
\CC_{\M}:=\lbrace z=x+iy\mid x\in \M, \, y\in \mathbb{R}\rbrace \subset \CC
\]
be a subset of the complex numbers where the real part is from $\M \subseteq \N$. A CoP with special requirement
\[
\C^o(n,\CC_{\M})=\lbrace [z]\mid z, n-z\in \CC_{\M}, \,\Im(z)^2 = \Re(z)\left(n-\Re(z)\right)\rbrace
\]
will be referred to as a \emph{complex circles of partition}, abbreviated as \textbf{cCoP}. The special requirement is called the \emph{circle condition}. The components $x$ and $y$ are referred to as the \emph{real weight} and \emph{imaginary weight}, respectively. The CoP $\C(n,\M)$ is called the \emph{source CoP}. Since in the case $\M = \N$ the source CoP is abbreviated as $\C(n)$, we will set
\begin{align}
\C^o(n):=\C^o(n, \CC_{\N}).\label{E-cCoP(N)}
\end{align}
To distinguish between the points $[z]$ of cCoPs and the points $z$ in the complex plane $\CC$, we refer to the latter as \emph{complex points}.
\end{definition}

\begin{definition}
Let $\C^o(n,\CC_{\M})$ be a cCoP and $[z]\in \C^o(n, \CC_{\M})$ be such that $z=x+iy$. The point $[n-z]$ with the weight $(n-x)-iy$ denotes the axis partner of $[z]$.
\end{definition}

In this framework, the first requirement of a CoP is fulfilled:
\[
\|[z]\|+\|[n-z]\|=x+iy+n-x-iy=n.
\]

\emph{Important:} For axis partners $[z_1]$ and $[z_2]=[n-z_1]$, we always have
\begin{align}\label{E_Im-axispartner}
\Im(z_1)=-\Im(z_2).
\end{align}

\begin{definition}
Let $\C^o(n,\CC_{\M})$ be a cCoP and $[z]\in \C^o(n, \CC_{\M})$ be such that $z=x+iy$. The point $[\overline{z}]$ with the weight $x-iy$ denotes the \emph{conjugate point} of $[z]$.
\end{definition}

\begin{definition}\label{D_conjugateAxis}
Let $\C^o(n,\CC_{\M})$ be a cCoP and $\LL_{[z],[n-z]}\inn \C^o(n, \CC_{\M})$. We denote the \emph{conjugate axis} of $\LL_{[z],[n-z]}$ by 
\[
\LL_{[\overline{z}],[\overline{n-z}]}.
\]
We do not distinguish between axes $\LL_{[z],[n-z]}$ and $\LL_{[n-z],[z]}$, since we do not consider the axes to be different up to the rearrangement of resident points $[z]$ and $[n-z]$.
\end{definition}

\begin{definition}
In relation to the definition \ref{axesOfCoP}, we define
\[
\nu^o(n,\CC_{\M}):=\abs{\{\LL_{[z],[n-z]}\inn \C^o(n, \CC_{\M})\}}
\]
as the number of axes of the cCoP $\C^o(n,\CC_{\M})$. Clearly,
\begin{align}
\nu^o(n,\CC_{\M})&= 
\begin{cases}
2\nu(n,\M)\quad \text{if the CoP} \; \C(n, \M) \text{ does not contain a degenerate axis}, \\
2\nu(n,\M) + 1 \quad \text{if the CoP} \; \C(n, \M) \text{ contains a degenerate axis}.
\end{cases}
\end{align}
\end{definition}

\noindent
In the following, we will deduce that the \emph{circle condition}
\begin{align}\label{E_spRequirement}
\Im(z)^2=\Re(z)\left(n-\Re(z)\right)
\end{align}
ensures that all points of a cCoP lie on a circle in the complex plane $\CC$.

\begin{theorem}\label{T_cCoPonCircle}
Let $\C^o(n, \CC_{\M})$ be a non-empty cCoP. The weights of all its points are located on a circle in the complex plane $\CC$ with its center on the real axis at $\frac{n}{2}$ and with a diameter of length $n$.
\end{theorem}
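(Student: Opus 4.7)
The plan is to show that the \emph{circle condition} \eqref{E_spRequirement} imposed on every point of a cCoP is literally, after one line of algebra, the Cartesian equation of the circle claimed in the statement, so that the theorem becomes a direct consequence of Definition \ref{D_cCoP}.

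First, I would write down what the target circle is analytically. A complex point $z = x+iy$ lies on the circle of center $\tfrac{n}{2}\in\R$ and radius $\tfrac{n}{2}$ if and only if $\bigl|z-\tfrac{n}{2}\bigr|^2 = \bigl(\tfrac{n}{2}\bigr)^2$, which in real coordinates reads $(x-\tfrac{n}{2})^2 + y^2 = \tfrac{n^2}{4}$.

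Next, I would expand the square on the left, cancel the constant $\tfrac{n^2}{4}$ appearing on both sides, and rearrange to arrive at $y^2 = x(n-x)$. This is precisely the condition $\Im(z)^2 = \Re(z)(n-\Re(z))$ demanded in Definition \ref{D_cCoP}, so the circle condition carves out exactly the claimed locus in $\CC$. Every point of $\C^o(n,\CC_{\M})$ satisfies this condition by definition, and the non-emptiness hypothesis ensures the assertion is not vacuous.

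The hard part is essentially non-existent — the content of the theorem lies in recognising that the circle condition already encodes the equation of a circle. As a sanity check, one sees that $z=0$ and $z=n$ both satisfy $y^2 = x(n-x)$ (with $y=0$) and sit antipodally on any circle of diameter $n$ centred at $\tfrac{n}{2}$, confirming both the center and the radius. A mildly delicate point, rather than an obstacle, is that the condition is invariant under $y\mapsto -y$; this is exactly what is needed so that, by \eqref{E_Im-axispartner}, an axis partner, which has opposite imaginary part, lies on the same circle as its companion.
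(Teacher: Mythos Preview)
Your argument is correct and considerably more direct than the paper's. You observe that the circle condition $y^2 = x(n-x)$ is, after completing the square, exactly $(x-\tfrac{n}{2})^2 + y^2 = (\tfrac{n}{2})^2$, which is the Cartesian equation of the desired circle; nothing more is needed.

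The paper takes a markedly more roundabout route. It first uses a calculus computation (differentiating $y=\sqrt{x(n-x)}$) to locate the point of maximal imaginary part at $x=\tfrac{n}{2}$, $|y|=\tfrac{n}{2}$; this step is in fact not used in the rest of the argument. It then forms the right triangle with vertices $z$, $\overline{z}$, $n-z$, applies the Pythagorean theorem together with the circle condition to show the hypotenuse $\LL_{[z],[n-z]}$ has length $n$, and finally invokes (implicitly) the converse of Thales' theorem to conclude that these three points lie on a circle with that axis as diameter, centred at the midpoint $\tfrac{n}{2}$. What the paper's route buys is that it simultaneously establishes $|\LL_{[z],[n-z]}|=n$ for every axis, a fact recorded later as a corollary; your approach yields the theorem itself in one stroke but would need a separate (trivial) line to recover that corollary. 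Overall your proof is the cleaner one for the statement as phrased.
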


\begin{proof}
We consider an arbitrary point $[z] \in \C^o(n, \CC_{\M})$ and its axis partner $[n-z]$. We set $x:=\Re(z)$ and $y:=\Im(z)$
\footnote{This framework will be used in the sequel.}. Using the circle condition \eqref{E_spRequirement}, we have
\begin{align}\label{E_imaginary}
y^2=x(n-x).
\end{align}
The definition \ref{D_cCoP} implies $x\in \M \subseteq \N$. Hence, $x>0$. We obtain for the second requirement for $[z]\in \C^o(n,\CC_{\M})$ the property $n-x\in \M$. Therefore, the weight $x$ must satisfy $0<x<n$. We now find the greatest imaginary part of the complex point $z_0$ such that $[z_0]\in \C^o(n,\CC_{\M})$. This means finding the root of the derivative of \eqref{E_imaginary}:
\begin{align*}
\frac{dy}{dx}&=\frac{d}{dx}\sqrt{x(n-x)} \\
&=\frac{1}{2}\frac{n-2x}{\sqrt{x(n-x)}}=0.
\end{align*}
Thus, we obtain $x_0=\frac{n}{2}$, provided that the denominator is non-vanishing. In this deduction, the denominator cannot be zero. Substituting $x=\frac{n}{2}$ into \eqref{E_imaginary}, we get
\[
y_0^2=\frac{n}{2}\left(n-\frac{n}{2}\right)=\left(\frac{n}{2}\right)^2
\]
and, hence $\abs{y_0}=\abs{\Im(z_0)}=\frac{n}{2}$. Clearly, $\Im(n-z)=\Im(\overline{z})=-\Im(z)$. Therefore, the points $[z]$, $\overline{[z]}$, and $[n-z]$ form a right-angled triangle with the hypotenuse $\LL_{[z], [n-z]}$ and the legs $2y$ and $n-2x$. By the Pythagorean Theorem (see Figure \ref{fig:diameter-axis-ccop}), we have
\begin{align*}
\abs{\LL_{[z], [n-z]}}^2 &=(2y)^2+(n-2x)^2 \\
&\text{and using \eqref{E_imaginary}} \\
&=4nx-4x^2+n^2-4nx+4x^2 \\
&=n^2\text{ and thus} \\
\abs{\LL_{[z],[n-z]}}&= n.
\end{align*}
Since the sum of $z$ and $n-z$ equals $n$, both points $[z]$ and $[n-z]$ are the end points of an axis $\LL_{[z],[n-z]}\inn \C^o(n, \CC_{\M})$ and simultaneously form the diameter of a circle containing the complex points $z$, $\overline{z}$, and $n-z$, as their imaginary parts satisfy the circle condition. This is a circle with a center on the real axis at $\frac{n}{2}$ and a diameter of length $n$.
\end{proof}
\bigskip

\begin{figure}[ht]
\centering
   \includegraphics[width=240pt]{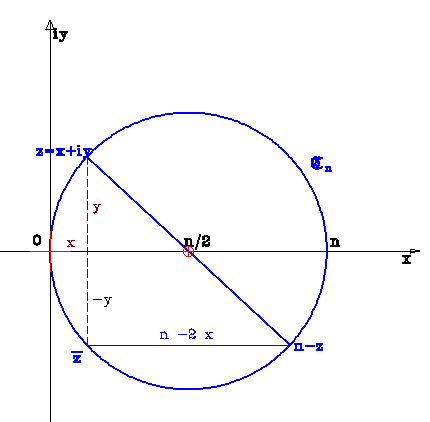}
\caption{Diameter as axis of a cCoP.}
\label{fig:diameter-axis-ccop}
\end{figure}

\begin{definition}\label{D_embeddingCircle}
The circle in the complex plane $\CC$ with center on the real axis at $\frac{n}{2}$ and diameter of length $n$ will be called the \emph{embedding circle} and denoted by $\CV_n$ of the cCoP $\C^o(n,\CC_{\M})$. It implies that
\begin{align*}
\CV_n&=\set{z\in\CC \mid 0\leq \Re(z)\leq n,\Im(z)^2=\Re(z)(n-\Re(z))}.
\end{align*}
Furthermore, we define
\begin{align*}
\I_n&:=\set{z\in \CC \mid 0\leq \Re(z)\leq n,~\Im(z)^2<\Re(z)(n-\Re(z))}, \\
\E_n &:=\CC \setminus (\I_n\cup \CV_n)
\end{align*}
as the sets of all complex points $z\in \CC$ within and outside of the embedding circle $\CV_n$, respectively.
\end{definition}

\begin{definition}\label{D-diameter}
The diameter of the embedding circle $\CV_n$ passing through the complex points $z$ and $n-z$ is denoted by
\[
\DV_n(z,n-z).
\]
\end{definition}

It is obvious that for a nonempty cCoP $\C^o(m,\CC_{\M})$ with $m<n$, the following holds:
\begin{align}\label{E_cCoPembedding}
||\C^o(m,\CC_{\M})||&\subset \CV_m \subset \I_n, \\
\I_m &\subset \I_n \quad \text{and} \quad \E_n \subset \E_m. \nonumber
\end{align}

\begin{corollary}
For all subsets $\M \subseteq \N$, the cCoPs $\C^o(n,\CC_{\M})$ for a fixed generator $n$ have a unique embedding circle $\CV_n$.
\end{corollary}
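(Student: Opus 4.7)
The plan is essentially to read off the conclusion from Definition \ref{D_embeddingCircle} together with Theorem \ref{T_cCoPonCircle}. I would first observe that Definition \ref{D_embeddingCircle} defines
\[
\CV_n = \set{z \in \CC \mid 0 \leq \Re(z) \leq n,\ \Im(z)^2 = \Re(z)(n-\Re(z))}
\]
purely in terms of the generator $n$; the base set $\M$ does not appear anywhere on the right-hand side. So the whole content of the corollary reduces to checking that, for every choice of $\M \subseteq \N$, the circle produced by Theorem \ref{T_cCoPonCircle} as the embedding circle of $\C^o(n,\CC_\M)$ is literally this same set $\CV_n$.

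Next, I would invoke Theorem \ref{T_cCoPonCircle} itself: for any non-empty cCoP $\C^o(n,\CC_\M)$, the weights of all its points lie on the circle in $\CC$ with center $n/2$ on the real axis and diameter $n$. The geometric data (center and diameter) depends only on $n$, so the circle carved out is the same regardless of $\M$, and it agrees with $\CV_n$ as defined. Changing $\M$ merely changes \emph{which} points on that circle qualify as weights of $\C^o(n,\CC_\M)$, since it only filters the admissible real parts through the condition $\Re(z)\in\M$ and $n-\Re(z)\in\M$; the locus defined by the circle condition \eqref{E_spRequirement} is unaffected.

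Finally, I would briefly address the boundary case in which $\C^o(n,\CC_\M)$ happens to be empty: Theorem \ref{T_cCoPonCircle} is stated for non-empty cCoPs, but Definition \ref{D_embeddingCircle} assigns the embedding circle $\CV_n$ to the cCoP $\C^o(n,\CC_\M)$ by convention through its dependence only on $n$, so the claim holds trivially in that case as well.

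The main obstacle is essentially none; the corollary is a definitional observation, and the only point requiring care is to note explicitly that the circle condition \eqref{E_spRequirement}, which governs membership in $\CV_n$, involves $\Re(z)$ and $\Im(z)$ but is completely decoupled from the subset $\M$ restricting those real parts.
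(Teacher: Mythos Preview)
Your proposal is correct and matches the paper's treatment: the corollary is stated without proof there, being an immediate consequence of the fact that Definition~\ref{D_embeddingCircle} specifies $\CV_n$ solely through $n$ and the circle condition, with no reference to $\M$. Your added remarks about Theorem~\ref{T_cCoPonCircle} and the empty case are sound but more than the paper bothers to say.
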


\begin{proposition}\label{L_emptyCap}
Let $\CV_m$ and $\CV_n$ be two embedding circles with $m\neq n$. The circles $\CV_m$ and $\CV_n$ have the origin as their only common point. In particular, we have
\[
\CV_m \cap \CV_n = \{(0,0)\}.
\]
\end{proposition}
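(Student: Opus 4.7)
The plan is to translate the claim into a system of two polynomial equations in the Cartesian coordinates $x = \Re(z)$, $y = \Im(z)$, and solve it directly.

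First, I would unpack Definition \ref{D_embeddingCircle}: a point $z = x + iy \in \CC$ lies in $\CV_m \cap \CV_n$ if and only if the two circle conditions
\[
y^2 = x(m - x), \qquad y^2 = x(n - x)
\]
are simultaneously satisfied, together with the constraints $0 \leq x \leq m$ and $0 \leq x \leq n$. Subtracting the two equations eliminates $y^2$ and yields
\[
0 = x(n - x) - x(m - x) = x(n - m).
\]
Since $m \neq n$, the factor $n - m$ is nonzero, forcing $x = 0$. Substituting back into either equation gives $y^2 = 0$, hence $y = 0$, and therefore $z = 0$.

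It remains only to verify that $z = 0$ actually belongs to both circles, which is immediate: $0 \leq 0 \leq m$, $0 \leq 0 \leq n$, and $0^2 = 0 \cdot m = 0 \cdot n$, so the circle conditions hold trivially for both $\CV_m$ and $\CV_n$. Hence $\CV_m \cap \CV_n = \{(0,0)\}$.

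I do not anticipate any real obstacle here. Geometrically, both circles are internally tangent at the origin (their centers $m/2$ and $n/2$ lie on the positive real axis, and each circle passes through the origin with vertical tangent), so from the outset one expects the intersection to be a single point; the computation above simply confirms this and rules out any second intersection point.
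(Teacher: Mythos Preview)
Your argument is correct and essentially identical to the paper's: both proofs write down the two circle conditions for a common point, subtract them to obtain $x(m-n)=0$, conclude $x=0$ from $m\neq n$, and then $y=0$. Your version is slightly tidier in that you explicitly verify the origin lies on both circles, and you add the geometric picture of internal tangency, but the core reasoning matches the paper exactly.
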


\begin{proof}
Suppose that $z_m \in \CV_m$ and that $z_n \in \CV_n$. Assume also that $z_m=z_n$ is a common complex point of both circles. We deduce $\Re(z_m)=\Re(z_n)$. By the circle condition \eqref{E_spRequirement}, we get for the imaginary parts
\begin{align*}
\Im(z_m)^2 &=\Re(z_m)\left(m-\Re(z_m)\right), \\
\Im(z_n)^2 &=\Re(z_n)\left(n-\Re(z_n)\right)=\Re(z_m)\left(n-\Re(z_m)\right),
\end{align*}
and as a difference:
\[
\Im(z_m)^2-\Im(z_n)^2=\Re(z_m)(m-n)=\Re(z_n)(m-n).
\]
Since $m\neq n$, this is only zero if $\Re(z_m)=\Re(z_n)=0$. Thus, for the imaginary part, we get (by the circle condition)
\[
\Im(z_m)^2=0(m-0)=0(n-0)=\Im(z_n)^2.
\]
Hence, the origin is the only common point of $\CV_m$ and $\CV_n$.
\end{proof}

\begin{corollary}[Big Bang]\label{C_bigBang}
If $m<n$, then the circle $\CV_m$ resides fully inside the circle $\CV_n$, except at the common origin. Therefore, the origin is the only common complex point of \textbf{all} embedding circles with increasing diameters, serving as the "Big Bang" of all embedding circles (refer to Figure \ref{fig:big-bang}).
\end{corollary}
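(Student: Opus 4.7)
The plan is to combine Proposition \ref{L_emptyCap} with a direct check that every non-origin point of $\CV_m$ lies strictly inside $\CV_n$, and then observe that these two facts together give the stated global statement about all embedding circles.

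First, I would fix $m < n$ and take an arbitrary $z \in \CV_m$ with $z \neq 0$. By Definition \ref{D_embeddingCircle}, this means $0 \leq \Re(z) \leq m$ and $\Im(z)^2 = \Re(z)(m - \Re(z))$. Since $m < n$, the range condition $0 \leq \Re(z) \leq n$ is automatic. The key step is to rule out $\Re(z) = 0$: in that case the circle equation forces $\Im(z)^2 = 0$, hence $z = 0$, contrary to the assumption. Therefore $\Re(z) > 0$, and then
\[
\Im(z)^2 \;=\; \Re(z)\bigl(m - \Re(z)\bigr) \;<\; \Re(z)\bigl(n - \Re(z)\bigr),
\]
where the strict inequality uses $m < n$ together with $\Re(z) > 0$. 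By Definition \ref{D_embeddingCircle} this says exactly $z \in \I_n$. So $\CV_m \setminus \{0\} \subset \I_n$, which is the first assertion: $\CV_m$ lies fully inside $\CV_n$ except at the shared origin.

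Next, to conclude the global claim, I would combine this containment with Proposition \ref{L_emptyCap}. For any finite or countable family of distinct positive diameters $m_1 < m_2 < \cdots$, the above shows that for every $i$ and every non-origin $z \in \CV_{m_i}$, the point $z$ lies strictly inside all later circles $\CV_{m_j}$ with $j > i$, hence cannot lie on any of them. Meanwhile, Proposition \ref{L_emptyCap} already guarantees that the origin is a common point of each pair $\CV_{m_i} \cap \CV_{m_j}$. Therefore the origin is the unique complex point lying on \emph{every} embedding circle simultaneously, which is the "Big Bang" conclusion.

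I do not anticipate any real obstacle: the only subtle point is the boundary case $\Re(z) = 0$, where one must invoke the circle condition to conclude $z = 0$ before applying the strict inequality. Once that is handled, everything follows from Proposition \ref{L_emptyCap} and the monotonicity of $x \mapsto x(t - x)$ in the parameter $t$ for $x > 0$.
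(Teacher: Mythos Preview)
Your argument is correct. In the paper this result is stated as a corollary of Proposition \ref{L_emptyCap} without any further proof, so there is no detailed argument to compare against; the paper treats the containment $\CV_m\setminus\{0\}\subset \I_n$ as geometrically evident from the fact that the two circles share only the origin. Your direct verification via $\Im(z)^2=\Re(z)(m-\Re(z))<\Re(z)(n-\Re(z))$ for $\Re(z)>0$ supplies exactly the step the paper leaves implicit, and your handling of the boundary case $\Re(z)=0$ is the right way to close it off.
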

\bigskip


\begin{figure}[ht]
\centering
   \includegraphics[width=240pt]{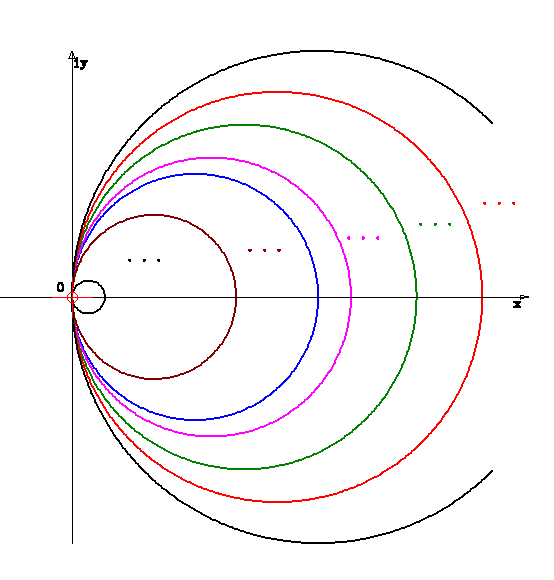}
\caption{Figure 2. The ``Big Bang''.}
\label{fig:big-bang}
\end{figure}

\begin{theorem}
Let $\C^o(m,\CC_{\M})$ and $\C^o(n,\CC_{\M})$ be two non-empty cCoPs with $m\neq n$. The cCoPs $\C^o(m,\CC_{\M})$ and $\C^o(n,\CC_{\M})$ have no common point
\[
\C^o(m,\CC_{\M})\cap\C^o(n,\CC_{\M})=\emptyset.
\]
\end{theorem}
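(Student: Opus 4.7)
The plan is to combine Theorem \ref{T_cCoPonCircle} with Proposition \ref{L_emptyCap} and then rule out the only candidate common point, namely the origin.

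First I would suppose, for contradiction, that there exists a point $[z]$ lying in both $\C^o(m,\CC_{\M})$ and $\C^o(n,\CC_{\M})$. By Theorem \ref{T_cCoPonCircle} the weight $z$ then lies simultaneously on the embedding circle $\CV_m$ and on the embedding circle $\CV_n$, since the weights of all points of a non-empty cCoP sit on its embedding circle. Proposition \ref{L_emptyCap} tells us that $\CV_m\cap\CV_n=\{(0,0)\}$ whenever $m\neq n$, so necessarily $z=0$, i.e.\ $\Re(z)=0$ and $\Im(z)=0$.

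Next I would show that the origin cannot be a point of any non-empty cCoP. By Definition \ref{D_cCoP} the condition $[z]\in\C^o(m,\CC_{\M})$ forces $z\in\CC_{\M}$, which means $\Re(z)\in\M\subseteq\N$, in particular $\Re(z)\geq 1>0$. This contradicts $\Re(z)=0$. (Equivalently, as noted in the proof of Theorem \ref{T_cCoPonCircle}, the conditions $z,m-z\in\CC_{\M}$ already force $0<\Re(z)<m$.) The same reasoning excludes $[0]$ from $\C^o(n,\CC_{\M})$. The contradiction completes the argument.

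The proof therefore reduces to two earlier results already at our disposal; there is no real obstacle. The only subtle point is the observation that the unique potential common weight supplied by Proposition \ref{L_emptyCap}, namely the origin, is a priori allowed as a complex point of $\CC$ but is excluded from every $\CC_{\M}$ because $0\notin\N$, and hence from every cCoP. This is why the intersection is in fact empty rather than just $\{[0]\}$.
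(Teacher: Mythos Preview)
Your argument is correct and matches the paper's own proof essentially step for step: both use that the weights of a cCoP lie on its embedding circle (the paper cites \eqref{E_cCoPembedding}, you cite Theorem \ref{T_cCoPonCircle}), invoke Proposition \ref{L_emptyCap} to reduce any common point to the origin, and then exclude the origin because $\M\subseteq\N$ forces $\Re(z)>0$.
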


\begin{proof}
Due to \eqref{E_cCoPembedding} and by Proposition \ref{L_emptyCap} the origin could be the only common point of both cCoPs. Since $\M\subseteq\N$, the real weight of a point of any cCoP cannot be $0$. Hence, both cCoPs $\C^o(m,\CC_{\M})$ and $\C^o(n,\CC_{\M})$ do not have a common point.  
\end{proof}

\begin{proposition}\label{P_equMembers}
Let $\C^o(m,\CC_{\M})$ and $\C^o(n,\CC_{\M})$ be two nonempty cCoPs with $n\neq m$. There are points $[z_m]\in\C^o(m,\CC_{\M})$ and $[z_n]\in\C^o(n,\CC_{\M})$ with a common real weight $\Re(z_m)=\Re(z_n)=x\in\M$ if and only if their source CoPs $\C(m,\M)$ and $\C(n,\M)$ share a common point $[x]$.
\end{proposition}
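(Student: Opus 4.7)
The plan is a direct two-way translation between the definitions of the source CoP and the cCoP, with the circle condition serving as the only nontrivial ingredient.

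For the forward direction, I would start by assuming the existence of points $[z_m]\in\C^o(m,\CC_{\M})$ and $[z_n]\in\C^o(n,\CC_{\M})$ with $\Re(z_m)=\Re(z_n)=x\in\M$. By Definition \ref{D_cCoP}, membership in a cCoP entails that both $z_m$ and $m-z_m$ lie in $\CC_{\M}$; reading off real parts, this gives $x\in\M$ and $m-x\in\M$, so $[x]\in\C(m,\M)$ by Definition \ref{major}. The same reasoning applied to $z_n$ yields $[x]\in\C(n,\M)$, and hence the two source CoPs share the point $[x]$.

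For the backward direction, suppose $[x]\in\C(m,\M)\cap\C(n,\M)$, so that $x,\,m-x,\,n-x\in\M$ with $0<x<\min(m,n)$. The natural candidate is to set $y_m:=\sqrt{x(m-x)}$ and $y_n:=\sqrt{x(n-x)}$ (both well-defined real numbers because the radicands are positive), and define $z_m:=x+iy_m$ and $z_n:=x+iy_n$. Then $\Re(z_m)=x\in\M$ and $\Re(m-z_m)=m-x\in\M$, so both $z_m$ and $m-z_m$ lie in $\CC_{\M}$; by construction $\Im(z_m)^2=x(m-x)=\Re(z_m)(m-\Re(z_m))$, so the circle condition \eqref{E_spRequirement} holds and $[z_m]\in\C^o(m,\CC_{\M})$. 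The analogous verification for $z_n$ gives $[z_n]\in\C^o(n,\CC_{\M})$, and by construction both points share the common real weight $x$.

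There is no real obstacle here: the proposition is essentially a restatement of the fact that, under the definition of $\CC_{\M}$, the real part of any point of a cCoP is precisely a point of the source CoP, and conversely the circle condition \eqref{E_spRequirement} can always be solved for the imaginary weight once the real weight lies strictly between $0$ and the generator. If anything warrants care, it is only noting that $0<x<m$ and $0<x<n$ follow from $x,m-x,n-x\in\M\subseteq\N$, which guarantees that the square roots defining $y_m$ and $y_n$ are real.
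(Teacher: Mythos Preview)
Your proof is correct and follows essentially the same approach as the paper: both directions are handled by directly translating between the definitions of $\C(n,\M)$ and $\C^o(n,\CC_{\M})$, with the imaginary weight in the backward direction chosen via the circle condition. If anything, your argument is more explicit than the paper's, which constructs $z_m=x+iy_m$ and $z_n=x+iy_n$ without spelling out that $y_m,y_n$ are the square roots from \eqref{E_spRequirement}, and dispatches the forward direction with the single phrase ``this reasoning can be reversed.''
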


\begin{proof}
Let $[x]$ be a common point of $\C(m,\M)$ and $\C(n,\M)$. We find that $m-x$ and $n-x$ are members of $\M$, and $m-x-iy_m$ and $n-x-iy_n$ are members of $\CC_{\M}$. Consequently, their axis partners $x+iy_m$ and $x+iy_n$ are also members of $\CC_{\M}$. Therefore, with $z_m=x+iy_m$ and $z_n=x+iy_n$, we have
\[
[z_m]\in\C^o(m,\CC_{\M})\mbox{ and }[z_n]\in\C^o(n,\CC_{\M})
\]
with $x=\Re(z_m)=\Re(z_n)$. This reasoning can be reversed so that from $x=\Re(z_m)=\Re(z_n)$, we deduce $[x]\in\C(m,\M)\cap\C(n,\M)$. 
\end{proof}
\bigskip

\begin{corollary}\label{C-c_equ_source}
The cCoP $\C^o(n,\CC_{\M})$ is not empty if and only if its source CoP $\C(n,\M)$ is not empty.
\end{corollary}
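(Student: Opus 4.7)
The plan is to extract both implications directly from the correspondence implicit in the proof of Proposition \ref{P_equMembers}, specialized to a single generator. Note that the \emph{statement} of that proposition is only about two distinct generators $m \neq n$, so I cannot simply apply it as a black box; but the construction inside its proof carries over verbatim to one generator.

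For the forward direction, I would assume $\C(n,\M)$ is non-empty and pick $[x] \in \C(n,\M)$, so that both $x$ and $n-x$ lie in $\M$. Since $x, n-x \in \N$ forces $0 < x < n$, the quantity $x(n-x)$ is strictly positive, so I can set $y := \sqrt{x(n-x)} \in \R$ and define $z := x + iy$. Then $\Re(z) = x \in \M$ shows $z \in \CC_{\M}$, and $\Re(n-z) = n - x \in \M$ shows $n - z \in \CC_{\M}$. The circle condition $\Im(z)^2 = \Re(z)(n - \Re(z))$ is immediate from the choice of $y$. Hence $[z] \in \C^o(n,\CC_{\M})$.

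For the reverse direction, I would take any $[z] \in \C^o(n,\CC_{\M})$ and write $z = x + iy$. The defining requirements $z \in \CC_{\M}$ and $n - z \in \CC_{\M}$ give $x = \Re(z) \in \M$ and $n - x = \Re(n-z) \in \M$ at once, so $[x] \in \C(n,\M)$.

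There is no genuine obstacle; the only point requiring care is the existence of the real square root $\sqrt{x(n-x)}$, which is ensured by the strict inequality $0 < x < n$ already used in the proof of Theorem \ref{T_cCoPonCircle}. Conceptually, the corollary records that the projection $[z] \mapsto [\Re(z)]$ restricted from $\C^o(n,\CC_{\M})$ to $\C(n,\M)$ is surjective, with a canonical section furnished by the upper semicircle of $\CV_n$.
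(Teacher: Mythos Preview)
Your argument is correct and is exactly the unpacking the paper intends: the paper gives no separate proof of this corollary, simply declaring it a consequence of Proposition~\ref{P_equMembers}, and your two directions reproduce the construction and projection used inside that proposition's proof, specialized to a single generator. Your observation that the proposition's \emph{statement} (with $m\neq n$) cannot be invoked as a black box, while its proof mechanism transfers verbatim, is a fair reading of what ``follows from'' means here.
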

\bigskip

\begin{proposition}\label{P-cCoP(N)}
Let $\M:=\N$ be the set of all positive integers. Each cCoP $\C^o(n)$ for integers $n\in \mathbb{N}$ with $n\geq 2$ is not empty.
\end{proposition}

\begin{proof}
By definition \ref{major}, the source CoPs of such cCoPs are $\C(n)$. These are non-empty for all integers $n\geq 2$ by virtue of \eqref{E_C(n)}. Due to Corollary \ref{C-c_equ_source}, their corresponding cCoPs are also nonempty.
\end{proof}

\begin{corollary}
Let $[z]$ be an arbitrary point (as in Theorem \ref{T_cCoPonCircle}) in a cCoP $\C^o(n,\CC_{\M})$. Each axis of the form $\LL_{[z],[n-z]}$ of a cCoP $\C^o(n,\CC_{\M})$ has equal lengths. In particular,
\[
\abs{\LL_{[z],[n-z]}}=n\mbox{ for all points }[z]\in\C^o(n,\CC_{\M}).
\]
\end{corollary}
\bigskip

We now deduce the length of a chord that joins arbitrary points of a cCoP under the circle condition. This may be viewed as a generalization of the length of an axis of a cCoP.

\begin{theorem}
Let $\C^o(n,\CC_{\M})$ be a nonempty cCoP, and let $[z_1], [z_2]\in\C^o(n,\CC_{\M})$, where $z_1:=x_1+iy_1$ and $z_2:=x_2+iy_2$. Denote the length of the chord $\mc{L}_{[z_1],[z_2]}$ by $\Gamma([z_1],[z_2])$. We have
\footnote{See \cite[p.2]{CoP} definition 2.2}
\[
\abs{\mc{L}_{[z_1],[z_2]}}=
\Gamma([z_1],[z_2])=
\abs{\sqrt{x_1(n-x_2)}\pm \sqrt{x_2(n-x_1)}},
\]
where the ''$-$'' sign is used if $\mathrm{sign}(y_1)=\mathrm{sign}(y_2)$ and the ''$+$'' sign otherwise.
\end{theorem}

\begin{proof}
The deduction of the length of the chord that joins two arbitrary points on a cCoP is straightforward. In particular, we deduce
\begin{align*}
\abs{\mc{L}_{[z_1],[z_2]}}^2
&=(x_1-x_2)^2+(y_1-y_2)^2\\
&=x_1^2+x_2^2-2x_1x_2+y_1^2+y_2^2\pm 2\abs{y_1y_2}\\
&\mbox{and using \eqref{E_imaginary}}\\
&=x_1^2+x_2^2-2x_1x_2\pm 2\abs{y_1y_2}+nx_1-x_1^2+nx_2-x_2^2\\
&=nx_1-x_1x_2+nx_2-x_1x_2\pm 2\abs{y_1y_2}\\
&=x_1(n-x_2)+x_2(n-x_1)\pm 2\sqrt{x_1(n-x_1)}\cdot\sqrt{x_2(n-x_2)}\\
&=x_1(n-x_2)+x_2(n-x_1)\pm 2\sqrt{x_2(n-x_1)}\cdot\sqrt{x_1(n-x_2)}\\
&=\left(\sqrt{x_1(n-x_2)}\pm \sqrt{x_2(n-x_1)} \right)^2.
\end{align*}
Thus, the function $\Gamma([z_1],[z_2])$ for the length of the chord simplifies to:
\begin{align}\label{E_Gamma}
\Gamma([z_1],[z_2])=\abs{\sqrt{x_1(n-x_2)}\pm \sqrt{x_2(n-x_1)}}.
\end{align}
\end{proof}
\bigskip

The following deductions can be made from the preceding calculation: If $[z_2]$ and $[n-z_1]$ are points in the cCoP that satisfy $z_2=n-z_1$, then the chord $\mc{L}_{[z_1],[z_2]}$ is a diameter. In this case, $y_2=-y_1$ and $x_2=n-x_1$, and therefore
\begin{align*}
\Gamma([z_1],[n-z_1])
&=\abs{\sqrt{x_1(n-x_2)}+\sqrt{x_2(n-x_1)}}\\
&=\abs{\sqrt{x_1 x_1}+\sqrt{(n-x_1)(n-x_1)}}\\
&=\abs{x_1+n-x_1}=n.
\end{align*}

If $[z_2]$ is the axis partner of the conjugate point of $[z_1]$, then $x_2=n-x_1$ and $y_2=y_1$. Since the signs of both $y$ values are equal, we obtain
\begin{align*}
\Gamma([z_1],[n-z_1])
&=\abs{\sqrt{x_1 x_1}-\sqrt{x_2 x_2}}\\
&=\abs{x_1 - x_2}.
\end{align*}
This observation coincides with the length of a chord in a CoP formulated in \cite{CoP}.\\

The degenerate axis of a cCoP coincides with the diameter that is parallel to the imaginary axis. It is a real diameter but has the property that it is equal to the conjugate axis. In this case, using \eqref{E_Gamma} with $x_2=x_1=\frac{n}{2}$ and $y_2=-y_1$, we have
\begin{align*}
\Gamma([z_1],[n-z_1])
&=\left|\sqrt{\left(\frac{n}{2}\right)^2}+\sqrt{\left(\frac{n}{2}\right)^2}\right|=n.
\end{align*}
\bigskip

\section{The interior and exterior points of complex circles of partition}

In this section, we introduce and study the concept of \emph{interior} and \emph{exterior} points of the complex circles of partition.

\begin{theorem}\label{T_interior}
Let $\C^o(n,\CC_{\M})$ be a nonempty cCoP. The distance from every complex point of $||\C^o(n,\CC_{\M})||$ to every complex point in $\I_n$ is less than $n$, and there are complex points in $||\C^o(n,\CC_{\M})||$ whose distances to some complex point in $\E_n$ are greater than $n$.
\end{theorem}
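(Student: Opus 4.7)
The first claim reduces to a straightforward disk-containment argument. By Theorem~\ref{T_cCoPonCircle} every weight $z\in||\C^o(n,\CC_{\M})||$ lies on the embedding circle $\CV_n$, so I would write $z=\tfrac{n}{2}+u$ with $|u|=\tfrac{n}{2}$. By Definition~\ref{D_embeddingCircle} any $w\in\I_n$ has the form $w=\tfrac{n}{2}+v$ with $|v|<\tfrac{n}{2}$. The triangle inequality then gives
\[
|z-w|=|u-v|\le |u|+|v|=\tfrac{n}{2}+|v|<n,
\]
which settles the interior half of the statement.

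For the exterior claim I would parametrise $w=\tfrac{n}{2}+v\in\E_n$ with $|v|>\tfrac{n}{2}$ and look at the diameter of $\CV_n$ whose line through the centre $\tfrac{n}{2}$ also passes through $w$. Its endpoints are the antipodal pair $z_{\pm}:=\tfrac{n}{2}\pm\tfrac{n}{2}\,v/|v|$, and since $v$ and $(n/2)v/|v|$ are positively parallel,
\[
|z_{-}-w|=\Bigl|v+\tfrac{n}{2}\,\tfrac{v}{|v|}\Bigr|=|v|+\tfrac{n}{2}>n.
\]
Thus at least one complex point of $\CV_n$ already realises the desired strict lower bound on the distance to $w$. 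To promote $z_{-}$ to an actual weight of $\C^o(n,\CC_{\M})$ I would use the closure of $||\C^o(n,\CC_{\M})||$ under the axis-partner involution $z\mapsto n-z$ and the conjugation $z\mapsto \overline{z}$ recorded in Definitions~\ref{D_cCoP}--\ref{D_conjugateAxis}, which furnishes up to four admissible weights on $\CV_n$ for every source point $[x]\in\C(n,\M)$.

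The main obstacle is precisely this matching step: the set $||\C^o(n,\CC_{\M})||$ populates only those points of $\CV_n$ whose real parts lie in $\M$, so the exact antipode $z_{-}$ need not itself be a cCoP weight. My plan is to start from any source point $[x]\in\C(n,\M)$ (which exists by non-emptiness via Corollary~\ref{C-c_equ_source}), use the fourfold symmetry of $||\C^o(n,\CC_{\M})||$ under $z\mapsto\overline{z}$ and $z\mapsto n-z$ to place a representative weight on the arc of $\CV_n$ lying opposite the centre from $w$, and then estimate $|z-w|$ from below by means of the chord-length formula \eqref{E_Gamma} combined with the inequality $|v|>\tfrac{n}{2}$. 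I expect this to be the delicate part of the argument: for exterior points very close to $\CV_n$ the available arc of weights may be rotated away from $w$, so the finesse lies in choosing the representative among the four symmetric partners of $[x]$ for which the chord from $w$ still exceeds $n$, rather than relying on an arbitrary point of the cCoP.
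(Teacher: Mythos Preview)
Your interior argument is correct and is just a quantitative version of the paper's own one--line appeal to ``the diameter is the longest chord''; the two approaches are essentially the same.

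For the exterior claim you have located the genuine difficulty, and unfortunately it cannot be overcome: the second assertion of the theorem is false as stated, so no amount of ``choosing among the four symmetric partners'' will rescue it. Concretely, take $n=4$ and $\M=\N$, so that
\[
\|\C^{o}(4)\|=\{\,1\pm i\sqrt{3},\ 2\pm 2i,\ 3\pm i\sqrt{3}\,\},
\]
and let $w=4.1\in\E_4$. A direct check gives $|z-w|\le\sqrt{12.61}<4$ for every $z$ in this list, so no weight of the cCoP lies at distance greater than $n$ from $w$. More generally, for $w=n+\epsilon$ with $\epsilon>0$ one computes, using the circle condition $y^2=x(n-x)$,
\[
|z-w|^2=(n+\epsilon)^2-(n+2\epsilon)x,
\]
which is maximised over $x\in\{1,\dots,n-1\}$ at $x=1$, giving $|z-w|^2=(n+\epsilon)^2-(n+2\epsilon)$. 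This is $<n^2$ whenever $2\epsilon(n-1)+\epsilon^2<n$, i.e.\ for all sufficiently small $\epsilon$. Thus the obstacle you flagged---that the antipode $z_{-}\in\CV_n$ need not be a cCoP weight---is fatal rather than merely delicate. The paper's proof glosses over exactly this point: it argues only that some point of $\CV_n$ is far from $w$, then tacitly replaces $\CV_n$ by $\|\C^{o}(n,\CC_\M)\|$. What the argument actually establishes is the statement with $\CV_n$ in place of the cCoP, and the subsequent corollary $\mathrm{Ext}[\C^{o}(n,\CC_\M)]=\mathrm{Ext}[\CV_n]$ inherits the same gap.
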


\begin{proof}
The diameter of $\CV_n$ is the longest line from any complex point on this circle to any complex point inside or on the circle. Hence, all complex points of $\I_n$ have a smaller distance to any complex point on $\CV_n$ than the diameter length. By \eqref{E_cCoPembedding}, this relation is also valid for any complex points of $||\C^o(n,\CC_{\M})||$ and $\I_n$. Therefore, their distances are less than the length of the diameter of $\CV_n$, which is $n$. On the other hand, the distances between some complex point of $||\C^o(n,\CC_{\M})||$ and some complex point in $\E_n$ are greater than $n$, since $\E_n$ consists of complex points outside of the embedding circle $\CV_n$ and the members of $\E_n$ are unbounded. 
\end{proof}
\bigskip

\begin{corollary}
For two non-empty cCoPs $\C^o(m,\CC_{\M})$ and $\C^o(n,\CC_{\M})$ with $m<n$, all distances between points of these sets are less than $n$. Furthermore, there are some points in these cCoPs whose mutual distances are greater than $m$.
\end{corollary}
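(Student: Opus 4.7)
The plan is to obtain this corollary by invoking Theorem \ref{T_interior} twice, once for each of the two cCoPs, after lining up the correct inclusion relations between the weight sets $\|\C^o(m,\CC_{\M})\|$, $\|\C^o(n,\CC_{\M})\|$ and the regions $\I_n$, $\E_m$. The upper bound will come from the ``interior'' half of that theorem, and the lower bound from its ``exterior'' half.

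For the first claim, that every distance is less than $n$, I would start from the observation recorded in \eqref{E_cCoPembedding} that $\|\C^o(m,\CC_{\M})\| \subset \CV_m \subset \I_n$ away from the origin, together with the fact that the origin can never be a point of a cCoP since real weights lie in $\M \subseteq \N$ and are therefore strictly positive. Applying the first assertion of Theorem \ref{T_interior} to the larger cCoP $\C^o(n,\CC_{\M})$ then gives that every point of $\|\C^o(n,\CC_{\M})\|$ is at distance strictly less than $n$ from every point of $\I_n$, and in particular from every point of $\|\C^o(m,\CC_{\M})\|$.

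For the second claim, the key step --- and the main obstacle, since it is not recorded among the inclusions of \eqref{E_cCoPembedding} --- is to establish the reverse-flavoured inclusion $\|\C^o(n,\CC_{\M})\| \subset \E_m$. I would do a short case analysis on $x_n := \Re(z_n)$ for an arbitrary $z_n = x_n + iy_n \in \|\C^o(n,\CC_{\M})\|$: if $x_n > m$, then $z_n \notin \I_m \cup \CV_m$ straight from Definition \ref{D_embeddingCircle}; and if $x_n \leq m$, the circle condition $y_n^2 = x_n(n - x_n)$ together with $n > m$ and $x_n > 0$ yields $y_n^2 > x_n(m - x_n)$, again placing $z_n$ in $\E_m$. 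With $\|\C^o(n,\CC_{\M})\| \subset \E_m$ secured, the second assertion of Theorem \ref{T_interior} applied this time to $\C^o(m,\CC_{\M})$ furnishes some point in $\|\C^o(m,\CC_{\M})\|$ whose distance to every complex point of $\E_m$ exceeds $m$; pairing it with any $z_n$ as above then realises a distance greater than $m$ between the two cCoPs, completing the argument.
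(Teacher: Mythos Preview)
Your proof is correct and follows exactly the route the paper intends: the corollary is stated immediately after Theorem \ref{T_interior} without proof, so the implicit argument is precisely to combine that theorem with the containments in \eqref{E_cCoPembedding}, which is what you do. Your explicit verification that $\|\C^o(n,\CC_{\M})\|\subset\E_m$ (via the case split on $\Re(z_n)$) is a detail the paper leaves tacit but which is indeed needed, and your handling of the origin is likewise a small gap in \eqref{E_cCoPembedding} that you patch correctly.
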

\bigskip

\begin{definition}\label{D_IntExt}
Let $\I_n,\E_n$ be (as in definition \ref{D_embeddingCircle}) all complex points \emph{inside} and \emph{outside} the embedding circle $\CV_n$. We call the points $z\in\I_n\cap\CC_{\M}$ \emph{interior} points with respect to $\CV_n$ and denote the set of all such points by $\In{\CV_n}$. Similarly, we call the complex points $z\in\E_n\cap\CC_{\M}$ the \emph{exterior} points with respect to $\CV_n$ and denote the set of all these points by $\Ex{\CV_n}$.
\bigskip

We can observe that
\begin{align*}
\In{\CV_n}=\I_n\cap\CC_{\M}\quad\text{and}\quad
\Ex{\CV_n}=\E_n\cap\CC_{\M}.
\end{align*}
\end{definition}

\begin{definition}
Let $\C^o(n,\CC_{\M})$ be a non-empty cCoP, and let $\CV_n$ be its embedding circle. We call the complex point $z\in\In{\CV_n}$ an \emph{interior} point with respect to cCoP $\C^o(n,\CC_{\M})$ if and only if for \emph{all} points $[w]\in\C^o(n,\CC_{\M})$ implies $\abs{z-w}<n$. We denote the set of all such points by $\In{\C^o(n,\CC_{\M})}$. Similarly, we call the complex point $z\in\Ex{\CV_n}$ an \emph{exterior} point with respect to $\C^o(n,\CC_{\M})$ if for \emph{some} points $[w]\in\C^o(n,\CC_{\M})$ implies $\abs{z-w}>n$. We denote the set of all such points by $\Ex{\C^o(n,\CC_{\M})}$.
\end{definition}
\bigskip

\begin{remark}
Let $n_0\in\N$ be the least generator for all cCoPs.
If $n>n_0$ and $\C^o(n,\CC_{\M})$ is an empty cCoP, then $\In{\C^o(n,\CC_{\M})}$ and $\Ex{\C^o(n,\CC_{\M})}$ are empty by definition.
\end{remark}

\begin{corollary}
If $\C^o(n,\CC_{\M})$ is a nonempty cCoP, then (by Theorem \ref{T_interior})
\begin{align}\label{E_IntExt}
\In{\C^o(n,\CC_{\M})}&=\In{\CV_n}=\I_n\cap\CC_{\M}\nonumber\\
&\mbox{ and }\\
\Ex{\C^o(n,\CC_{\M})}&=\Ex{\CV_n}=\E_n\cap\CC_{\M}\nonumber.
\end{align}
\end{corollary}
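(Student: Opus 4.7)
The plan is to recognize that this corollary is essentially a direct translation of Theorem \ref{T_interior} together with Definition \ref{D_IntExt}, and then to prove the two claimed equalities by a pair of inclusions. The second equalities in each chain, namely $\In{\CV_n}=\I_n\cap\CC_{\M}$ and $\Ex{\CV_n}=\E_n\cap\CC_{\M}$, are literally the content of Definition \ref{D_IntExt}, so nothing needs to be shown there. All the work sits in proving $\In{\C^o(n,\CC_{\M})}=\In{\CV_n}$ and the analogous exterior statement.

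For the interior equality I would argue both inclusions separately. The inclusion $\In{\C^o(n,\CC_{\M})}\subseteq\In{\CV_n}$ is immediate from the definition of interior point with respect to a cCoP, which requires $z\in\In{\CV_n}$ as a prerequisite. For the reverse inclusion, let $z\in\In{\CV_n}=\I_n\cap\CC_{\M}$. Then $z\in\I_n$, so by the first assertion of Theorem \ref{T_interior}, the distance from every complex point of $\|\C^o(n,\CC_{\M})\|$ to $z$ is less than $n$, i.e.\ $|z-w|<n$ for all $[w]\in\C^o(n,\CC_{\M})$. Combined with $z\in\In{\CV_n}$, this is exactly the defining condition for $z\in\In{\C^o(n,\CC_{\M})}$.

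The exterior equality is argued in the same way. The inclusion $\Ex{\C^o(n,\CC_{\M})}\subseteq\Ex{\CV_n}$ is built into the definition. Conversely, if $z\in\Ex{\CV_n}=\E_n\cap\CC_{\M}$, then $z\in\E_n$, and the second assertion of Theorem \ref{T_interior} guarantees the existence of a point $[w]\in\C^o(n,\CC_{\M})$ with $|z-w|>n$. Hence $z$ satisfies the defining condition for $\Ex{\C^o(n,\CC_{\M})}$.

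There is essentially no serious obstacle here, since the cCoP is non-empty, which is what allows Theorem \ref{T_interior} to be invoked; the one thing to be careful about is the quantifier asymmetry between the interior case (\emph{every} axis partner) and the exterior case (\emph{some} axis partner), but this asymmetry is already baked into the theorem and into Definition \ref{D_IntExt}, so the matching is automatic. The whole proof therefore reduces to a short citation of Theorem \ref{T_interior} applied to each of the two sides.
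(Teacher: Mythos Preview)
Your proposal is correct and follows exactly the approach the paper intends: the corollary is stated without explicit proof in the paper, being declared an immediate consequence of Theorem \ref{T_interior} together with Definition \ref{D_IntExt}, and your argument simply makes those two citations precise via the obvious pair of inclusions.
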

\bigskip

\begin{proposition}\label{P_IntExt}
Let $\C^o(m,\CC_{\M})$ and $\C^o(n,\CC_{\M})$ be two nonempty cCoPs. We have $m<n$ if and only if
\[
\In{\C^o(m,\CC_{\M})}\subset\In{\C^o(n,\CC_{\M})}\mbox{ and }
\Ex{\C^o(n,\CC_{\M})}\subset\Ex{\C^o(m,\CC_{\M})}.
\]
\end{proposition}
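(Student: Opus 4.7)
The plan is to reduce the proposition to the purely set-theoretic inclusions \eqref{E_cCoPembedding}, namely $\I_m\subset\I_n$ and $\E_n\subset\E_m$ whenever $m<n$, combined with the identifications $\In{\C^o(k,\CC_{\M})}=\I_k\cap\CC_{\M}$ and $\Ex{\C^o(k,\CC_{\M})}=\E_k\cap\CC_{\M}$ furnished by the preceding corollary for non-empty cCoPs. Intersecting an inclusion with a fixed set preserves it, so the inclusions of the interiors and exteriors of cCoPs will follow from the inclusions of $\I$ and $\E$ in the ambient plane.

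For the forward direction, assuming $m<n$, I would rewrite both interior and both exterior sets using that corollary, then apply the ambient inclusions from \eqref{E_cCoPembedding} and intersect with $\CC_{\M}$, yielding the two asserted containments. To show that each containment is proper, I would exhibit explicit witnesses from the cCoPs themselves. For the interior: pick any $[z_m]\in\C^o(m,\CC_{\M})$ (non-empty by hypothesis); then $z_m\in\CC_{\M}$ lies on $\CV_m\setminus\set{0}$, which by Corollary \ref{C_bigBang} is contained in $\I_n$; yet $z_m\notin\I_m$ because $\I_m$ is the strict interior of $\CV_m$. For the exterior: any $[z_n]\in\C^o(n,\CC_{\M})$ gives a point with $\Re(z_n)>0$ satisfying the circle condition $\Im(z_n)^2=\Re(z_n)(n-\Re(z_n))>\Re(z_n)(m-\Re(z_n))$ (since $m<n$), hence $z_n\in\E_m\cap\CC_{\M}$; but $z_n\in\CV_n$, so $z_n\notin\E_n$.

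For the reverse direction, I would argue by contrapositive on the ordering of $m$ and $n$. If $m=n$ the two cCoPs coincide, so both sides of each inclusion are equal, contradicting strict containment. If $m>n$, then applying the forward direction with the roles of $m$ and $n$ swapped yields $\In{\C^o(n,\CC_{\M})}\subset\In{\C^o(m,\CC_{\M})}$ and $\Ex{\C^o(m,\CC_{\M})}\subset\Ex{\C^o(n,\CC_{\M})}$, which directly contradicts the hypothesized inclusions. Thus the only remaining possibility is $m<n$.

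The main obstacle I foresee is the bookkeeping around strict versus non-strict inclusion: once the identifications with $\I_k\cap\CC_{\M}$ and $\E_k\cap\CC_{\M}$ are in hand, the containment itself is immediate from \eqref{E_cCoPembedding}, but producing honest witnesses for the strict part of the exterior inclusion requires noting that a point of $\C^o(n,\CC_{\M})$ sits on $\CV_n$ rather than inside it, and then squeezing this into $\E_m$ via the circle condition. The non-emptiness hypothesis is precisely what makes these witnesses available.
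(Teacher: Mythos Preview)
Your proposal is correct and follows essentially the same route as the paper: both directions are reduced, via the identifications $\In{\C^o(k,\CC_{\M})}=\I_k\cap\CC_{\M}$ and $\Ex{\C^o(k,\CC_{\M})}=\E_k\cap\CC_{\M}$ from \eqref{E_IntExt}, to the ambient inclusions $\I_m\subset\I_n$ and $\E_n\subset\E_m$ of \eqref{E_cCoPembedding}. Your treatment is in fact more careful than the paper's, since you supply explicit witnesses for the strictness of the inclusions and replace the paper's bare assertion ``which is only valid with $m<n$'' by a clean trichotomy argument.
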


\begin{proof}
We suppose $m<n$. By \eqref{E_IntExt}, we have
\begin{align*}
\In{\C^o(m,\CC_{\M})}&=\I_m\cap\CC_{\M}\mbox{ and since \eqref{E_cCoPembedding}}\\
&\subset\I_n\cap\CC_{\M}
=\In{\C^o(n,\CC_{\M})}.
\end{align*}
Similarly, we have
\begin{align*}
\Ex{\C^o(n,\CC_{\M})}&=\E_n\cap\CC_{\M}\mbox{ and since \eqref{E_cCoPembedding}}\\
&\subset\E_m\cap\CC_{\M}
=\Ex{\C^o(m,\CC_{\M})}.
\end{align*}
On the other hand, with the embedding $\In{\C^o(m,\CC_{\M})}\subset\In{\C^o(n,\CC_{\M})}$, we deduce $\I_m\cap\CC_{\M}\subset\I_n\cap\CC_{\M}$, which is only valid with $m<n$. Analogously, using the embedding $\Ex{\C^o(n,\CC_{\M})}\subset\Ex{\C^o(m,\CC_{\M})}$, we deduce $m<n$.
\end{proof}
\bigskip

\begin{proposition}
Let $\C^o(m,\CC_{\M})$ and $\C^o(n,\CC_{\M})$ be two nonempty cCoPs. We have $m<n$ if and only if
\[
||\C^o(m,\CC_{\M})||\subset\In{\C^o(n,\CC_{\M})}\mbox{ and }
||\C^o(n,\CC_{\M})||\subset\Ex{\C^o(m,\CC_{\M})}.
\]
\end{proposition}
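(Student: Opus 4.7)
The plan is to reduce both directions to Proposition \ref{P_IntExt} together with the embedding relation \eqref{E_cCoPembedding}, using the key observation that a weight of a point of a cCoP lies on the embedding circle itself, and cannot coincide with the origin because its real part is a positive natural number.

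For the forward implication, assume $m<n$. Take any $[z]\in\C^o(m,\CC_{\M})$, so that its weight satisfies $z\in\CV_m\cap\CC_{\M}$. Since $\Re(z)\in\M\subseteq\N$, we have $\Re(z)\geq 1$, hence $z\neq 0$. By Corollary \ref{C_bigBang} (or directly by \eqref{E_cCoPembedding}), $\CV_m\setminus\{0\}\subset\I_n$, so $z\in\I_n\cap\CC_{\M}=\In{\CV_n}=\In{\C^o(n,\CC_{\M})}$. This yields the first inclusion. For the second inclusion, take $[w]\in\C^o(n,\CC_{\M})$; then $w\in\CV_n\cap\CC_{\M}$ with $\Re(w)\geq 1$, hence $w\neq 0$. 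Because $\CV_m$ lies entirely inside $\CV_n$ except at the origin, every nonzero point of $\CV_n$ sits in $\E_m$; thus $w\in\E_m\cap\CC_{\M}=\Ex{\C^o(m,\CC_{\M})}$.

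For the converse, suppose that both inclusions hold, and argue by contradiction that $m\geq n$ is impossible. Pick any $[z]\in\C^o(m,\CC_{\M})$ (which exists by non-emptiness). The hypothesis gives $z\in\In{\C^o(n,\CC_{\M})}\subset\I_n$, while $z\in\CV_m$ by definition of the weight. If $m=n$, then $z$ lies simultaneously in $\CV_n$ and in $\I_n$, contradicting the disjointness of $\CV_n$ and $\I_n$ from Definition \ref{D_embeddingCircle}. If $m>n$, then by Proposition \ref{P_IntExt} we would have $\In{\C^o(n,\CC_{\M})}\subset\In{\C^o(m,\CC_{\M})}\subset\I_m$, whence $z\in\I_m\cap\CV_m=\emptyset$, again impossible. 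Hence $m<n$.

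The main obstacle is not any deep estimate but rather the housekeeping of handling the origin: $\CV_m$ and $\CV_n$ agree at the single point $0$ by Proposition \ref{L_emptyCap}, so the strict inclusion $\CV_m\setminus\{0\}\subset\I_n$ is only available once one verifies that weights of actual cCoP points are never $0$. This is guaranteed precisely by the condition $\Re(z)\in\M\subseteq\N$, and this observation is the single place where the setup $\CC_{\M}$ enters the argument.
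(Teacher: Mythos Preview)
Your proof is correct and takes essentially the same route as the paper, reducing both directions to the containment relations among $\CV_m$, $\I_n$, and $\E_m$ via \eqref{E_cCoPembedding} and \eqref{E_IntExt}. Your explicit handling of the origin and your case-split for the converse are in fact more careful than the paper's version, which uses $\CV_m\subset\I_n$ without separating off the point $0$ and asserts the converse rather tersely.
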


\begin{proof}
We suppose $m<n$. By \eqref{E_cCoPembedding} and the embedding $||\C^o(m,\CC_{\M})||\subset\CC_{\M}$, we deduce
\begin{align*}
||\C^o(m,\CC_{\M})||&\subset\CV_m\cap\CC_{\M}\\
&\subset(\CV_m\cap\CC_{\M})\cup\I_m\\
&\subset(\CV_m\cup\I_n)\cap\CC_{\M}\mbox{ and since }\CV_m\subset\I_n\\
&=\I_n\cap\CC_{\M}\mbox{ and because of \eqref{E_IntExt}}\\
&=\In{\C^o(n,\CC_{\M})}.
\end{align*}
The embedding $||\C^o(n,\CC_{\M})||\subset\Ex{\C^o(m,\CC_{\M})}$ can be easily verified. On the other hand, embedding $||\C^o(m,\CC_{\M})||\subset\In{\C^o(n,\CC_{\M})}$ implies $\I_m\cap\CC_{\M}\subset\I_n\cap\CC_{\M}$, which is only valid for $m<n$. Analogously, using the embedding $\Ex{\C^o(n,\CC_{\M})}\subset\Ex{\C^o(m,\CC_{\M})}$, we deduce $m<n$.
\end{proof}
\bigskip

\begin{proposition}
Let $\C^{o}(m,\CC_{\M})\neq \emptyset$. If $[z_1],[z_2]$ are the axis partners of cCoP $\C^{o}(n,\CC_{\M})$ and $|\LL_{[z_1],[z_2]}|=n>m$, then $z_1,z_2\in \mathrm{Ext}[\C^{o}(m,\CC_{\M})]$. 
\end{proposition}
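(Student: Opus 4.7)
The plan is to combine the geometric position of axis partners on the embedding circle with the Big Bang containment in order to push $z_1, z_2$ into the exterior region of $\CV_m$. First, I would note that since $[z_1], [z_2]$ are axis partners of $\C^o(n,\CC_{\M})$, Theorem \ref{T_cCoPonCircle} places the complex points $z_1, z_2$ on the embedding circle $\CV_n$, and by the definition of a cCoP we already have $z_1, z_2 \in \CC_{\M}$; in particular $\Re(z_i) \in \M \subseteq \N$, so $\Re(z_i) \geq 1 > 0$.

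Next, I would establish the inclusion $\CV_n \setminus \{0\} \subset \E_m$ whenever $n > m$. This is the complementary companion of Corollary \ref{C_bigBang}, and it is the step that requires the most care since it is not stated verbatim in the excerpt. The computation is short: for $z \in \CV_n$ the circle condition gives $|z|^2 = n\,\Re(z)$, whence the squared distance from $z$ to the real center $m/2$ of $\CV_m$ expands to $(n-m)\,\Re(z) + (m/2)^2$. Since $n > m$, this strictly exceeds $(m/2)^2$ as soon as $\Re(z) > 0$, placing $z$ strictly outside the geometric disk bounded by $\CV_m$. A quick case split via Definition \ref{D_embeddingCircle} handles $\Re(z) > m$ (where $z \in \E_m$ automatically) and confirms $z \in \E_m$ throughout the range $0 < \Re(z) \leq m$.

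Applying this to our axis partners, $\Re(z_i) \geq 1 > 0$ forces $z_i \in \E_m$, and combined with $z_i \in \CC_{\M}$ this yields $z_i \in \E_m \cap \CC_{\M} = \Ex{\CV_m}$ by Definition \ref{D_IntExt}. Finally, the hypothesis $\C^o(m,\CC_{\M}) \neq \emptyset$ activates the identification \eqref{E_IntExt}, giving $\Ex{\CV_m} = \Ex{\C^o(m,\CC_{\M})}$, and the conclusion $z_1, z_2 \in \Ex{\C^o(m,\CC_{\M})}$ follows. The only nontrivial ingredient is the inclusion $\CV_n \setminus \{0\} \subset \E_m$; everything else is an immediate rewrite through the already-established machinery.
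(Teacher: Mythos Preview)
Your proof is correct, but it follows a different route from the paper's. The paper dispatches the statement in one line by invoking the immediately preceding proposition, which already records that $\|\C^o(n,\CC_{\M})\|\subset\Ex{\C^o(m,\CC_{\M})}$ whenever both cCoPs are non-empty and $m<n$; since axis partners $[z_1],[z_2]$ lie in $\C^o(n,\CC_{\M})$, their weights sit in $\|\C^o(n,\CC_{\M})\|$ and the conclusion is immediate. You instead reprove the key geometric containment from scratch, via the identity $|z-\tfrac{m}{2}|^2=(n-m)\Re(z)+(\tfrac{m}{2})^2$ for $z\in\CV_n$, to obtain $\CV_n\setminus\{0\}\subset\E_m$ directly, and only then intersect with $\CC_{\M}$ and apply \eqref{E_IntExt}. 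The paper's approach is shorter because the machinery is already in place; yours is more self-contained and makes the geometry explicit without relying on the preceding proposition. In both arguments the hypothesis $\C^o(m,\CC_{\M})\neq\emptyset$ enters only at the last step, to identify $\Ex{\CV_m}$ with $\Ex{\C^o(m,\CC_{\M})}$.
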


\begin{proof}
Using the requirement $\LL_{[z_1],[z_2]}\inn\C^o(n,\CC_{\M})$ with $n>m$ and Proposition \ref{P_IntExt}, we deduce
\begin{align*}
||\C^o(n,\CC_{\M})||&\subset\Ex{\C^o(m,\CC_{\M})}
\mbox{ and therefore}\\
z_1,z_2&\in\Ex{\C^o(m,\CC_{\M})}.
\end{align*}
\end{proof}

\begin{proposition}
Let $\C^{o}(m,\CC_{\M})\neq \emptyset$. If $\mathrm{Int}[\C^{o}(m,\CC_{\M})]\subset \mathrm{Int}[\C^{o}(n,\CC_{\M})]$, then $\C^{o}(n,\CC_{\M})\neq \emptyset$.
\end{proposition}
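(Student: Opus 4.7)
The plan is to proceed by contradiction: assume $\C^o(n,\CC_\M)=\emptyset$ and derive an incompatibility with the set inclusion $\In{\C^o(m,\CC_\M)}\subset\In{\C^o(n,\CC_\M)}$ by showing the right-hand side empty but the left-hand side non-empty.

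For the right-hand side, I invoke the convention declared in the definition immediately preceding this proposition: when $\C^o(n,\CC_\M)$ is empty (and $n>n_o$, the least generator), $\In{\C^o(n,\CC_\M)}$ is empty by stipulation. The hypothesised inclusion then forces $\In{\C^o(m,\CC_\M)}=\emptyset$ as well.

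For the left-hand side, I produce an explicit point. Since $\C^o(m,\CC_\M)\neq\emptyset$, pick any $[z]$ in it and set $x:=\Re(z)$. The membership conditions $z,\,m-z\in\CC_\M$ force $x,\,m-x\in\M\subseteq\N$, whence $1\leq x\leq m-1$ and in particular $x(m-x)>0$. The real number $w:=x+0\cdot i$ then lies in $\CC_\M$ (its real part belongs to $\M$) and in $\I_m$ (since $\Im(w)^2=0<x(m-x)=\Re(w)(m-\Re(w))$). By the corollary following Definition \ref{D_IntExt}, $w\in \I_m\cap\CC_\M=\In{\C^o(m,\CC_\M)}$, contradicting its emptiness established in the previous step.

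The only delicate point is correctly invoking the convention governing the interior of an empty cCoP; beyond that, the argument is a direct unpacking of the definitions, with the existence of a suitable real weight in $(0,m)\cap\M$ being an immediate consequence of the assumption that $\C^o(m,\CC_\M)$ is non-empty.
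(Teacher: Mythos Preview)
Your argument is correct and follows essentially the same logic as the paper: both show that $\In{\C^o(m,\CC_\M)}$ is non-empty, use the inclusion to force $\In{\C^o(n,\CC_\M)}\neq\emptyset$, and then invoke the convention that an empty cCoP has empty interior to conclude. You merely spell out more explicitly than the paper why $\In{\C^o(m,\CC_\M)}\neq\emptyset$, by exhibiting the real point $w=\Re(z)$ in $\I_m\cap\CC_\M$.
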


\begin{proof}
The above hypothesis with definition \ref{D_IntExt} implies that $\In{\C^o(m,\CC_{\M})}\neq\emptyset$ and $\In{\C^o(n,\CC_{\M})}\supset\emptyset$. Hence $\C^o(n,\CC_{\M})\neq\emptyset$.
\end{proof}

We state a converse of the above result in the following theorem.

\begin{theorem}
Let $\C^{o}(m,\CC_{\M}),\C^{o}(n,\CC_{\M})\neq \emptyset$. If $m<n$ and $[z]\inn \C^{o}(n,\CC_{\M})$, then  $z\not \in \In{\C^o(m,\CC_{\M})}$.
\end{theorem}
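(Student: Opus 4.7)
The plan is to reduce the claim to a statement about the embedding circle $\CV_n$ rather than the full cCoP. Since $[z]\inn\C^o(n,\CC_{\M})$ means $z\in\|\C^o(n,\CC_{\M})\|$, Theorem \ref{T_cCoPonCircle} gives $z\in\CV_n$. Definition \ref{D_IntExt} together with the non-emptiness of $\C^o(m,\CC_{\M})$ and the identity $\In{\C^o(m,\CC_{\M})}=\I_m\cap\CC_{\M}$ from \eqref{E_IntExt} reduces the problem to showing $z\notin\I_m$.

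First I would write $x:=\Re(z)$ and invoke the circle condition for $\CV_n$ to record $\Im(z)^2=x(n-x)$ with $0\le x\le n$. The claim $z\notin\I_m$ then splits into two transparent cases. If $x>m$, then $z$ already fails the real-part constraint $0\le\Re(z)\le m$ built into the definition of $\I_m$, so $z\notin\I_m$. If instead $0\le x\le m$, I would simply compare the quadratic expressions by subtracting:
\[
\Im(z)^2-x(m-x)=x(n-x)-x(m-x)=x(n-m)\ge 0,
\]
using $x\ge 0$ and $n>m$. Hence $\Im(z)^2\ge x(m-x)$, violating the strict inequality $\Im(z)^2<\Re(z)(m-\Re(z))$ required for membership in $\I_m$.

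A slightly more conceptual variant, if one wishes to avoid even this case split, is to appeal directly to Corollary \ref{C_bigBang} (Big Bang): for $m<n$ we have $\CV_m\subset\I_n\cup\{0\}$, from which one readily deduces $\I_m\subset\I_n$; since $\CV_n$ is disjoint from $\I_n$ by construction, $\CV_n\cap\I_m=\emptyset$, and the membership $z\in\CV_n$ finishes the argument. Either route concludes $z\notin\I_m\cap\CC_{\M}=\In{\C^o(m,\CC_{\M})}$, as required.

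There is no real obstacle here; the only thing to be careful about is the strictness of the inequality defining $\I_m$ in Definition \ref{D_embeddingCircle}, which is exactly what allows the boundary case $x=0$ (where equality $\Im(z)^2=x(m-x)=0$ holds) to still yield $z\notin\I_m$. Given this, the proof is essentially a one-line comparison of the two circle conditions.
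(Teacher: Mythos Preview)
Your proposal is correct. Your ``conceptual variant'' is precisely the paper's proof: the paper observes $\CV_n\cap\I_n=\emptyset$ from Definition~\ref{D_embeddingCircle}, combines this with $\|\C^o(n,\CC_{\M})\|\subset\CV_n$ to get $z\notin\I_n$, and then invokes the inclusion $\I_m\subset\I_n$ (recorded in \eqref{E_cCoPembedding}) to conclude $z\notin\I_m\supset\I_m\cap\CC_{\M}=\In{\C^o(m,\CC_{\M})}$.

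Your primary route---the explicit case split on $x>m$ versus $0\le x\le m$ followed by the subtraction $\Im(z)^2-x(m-x)=x(n-m)\ge 0$---is simply an in-line verification of the containment $\I_m\subset\I_n$ and the disjointness $\CV_n\cap\I_m=\emptyset$, rather than quoting them from earlier. It is slightly more self-contained but less economical; the paper prefers to cite the already-established set relations. Your remark about the strictness of the inequality in $\I_m$ handling the boundary case $x=0$ is accurate and worth keeping in mind, though the paper's proof sidesteps this by working purely at the level of set inclusions.
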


\begin{proof}
By definition \ref{D_embeddingCircle}, it implies $\CV_n\cap\I_n=\emptyset$ and $||\C^o(n,\CC_{\M})||\subset\CV_n$. It easily follows that $\I_n\cap||\C^o(n,\CC_{\M})||=\emptyset$. We know that for each point $[z]\in\C^o(n,\CC_{\M})$ the point  $z\not\in\I_n$. Because $m<n$, we deduce additionally that $\I_m\subset\I_n$ and hence 
\[
z\not\in\I_n\supset\I_m\supset\I_m\cap\CC_{\M}=\In{\C^o(m,\CC_{\M})}.
\]
\end{proof}

\section{The expansion principles}

In this section, we do not distinguish between the axes $\LL_{[z],[n-z]}$ and $\LL_{[n-z],[z]}$, as the axes are considered equivalent under the rearrangement of resident points. Subsequently, we will consider only axes $\LL_{[z],[n-z]}$ with
\[
\Re(z)<\Re(n-z).
\]

\begin{lemma}[The axial point ordering principle]\label{L-AxialPOP}
Let $\M \subseteq \N$ and $\C^o(n,\CC_{\M})$ and $\C^o(n+t,\CC_{\M})$ be non--empty cCoPs with integers $ n,t\in\mathbb{Q}\subseteq\N $. Consider the axes $ \LL_{[z],[n-z]} \inn \C^o(n,\CC_{\M})$ and $ \LL_{[w],[n+t-w]} \inn \C^o(n+t,\CC_{\M}) $. We have
\begin{align}\label{E_equiv1}
\Re(z)<\Re(w) \mbox{ and } \Re(n-z)<\Re(n+t-w)
\end{align}
if and only if
\begin{align}\label{E_equiv2}
\Re(z)<\Re(w)<\Re(z)+t.
\end{align}
\end{lemma}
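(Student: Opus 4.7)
The plan is to treat the lemma as essentially an algebraic restatement, extracting real parts of the integer translations $n$ and $n+t$ and then cancelling. Since $n$ and $t$ are real (in fact natural numbers), we have $\Re(n-z) = n - \Re(z)$ and $\Re(n+t-w) = n+t - \Re(w)$, so the second inequality in \eqref{E_equiv1} transforms transparently.

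First I would assume \eqref{E_equiv1} and rewrite its second inequality as
\[
n - \Re(z) < n + t - \Re(w),
\]
which after cancelling $n$ becomes $\Re(w) - \Re(z) < t$, i.e., $\Re(w) < \Re(z) + t$. Combined with the first inequality $\Re(z) < \Re(w)$, this gives the chain $\Re(z) < \Re(w) < \Re(z) + t$, which is precisely \eqref{E_equiv2}. For the converse, I would assume \eqref{E_equiv2}, read off $\Re(z) < \Re(w)$ directly, and rearrange $\Re(w) < \Re(z) + t$ as $\Re(w) - \Re(z) < t$, then add $n - \Re(w)$ to both sides (or equivalently add $n$ and subtract) to recover $n - \Re(z) < n + t - \Re(w) = \Re(n+t-w)$.

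There is essentially no obstacle here; the only thing worth flagging is that one must use that $n,t$ are real-valued so that taking real parts commutes with the subtraction $n-z$ and $n+t-w$, which is why the hypothesis $n,t \in \N$ enters. The lemma's role is evidently to serve as a bookkeeping device for the ordering of axis endpoints under the expansion $n \mapsto n+t$, preparing the ground for the expansion principles in the sequel.
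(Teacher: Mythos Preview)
Your proof is correct and follows essentially the same route as the paper: both isolate the second inequality, use that $n,t$ are real so that $\Re(n-z)=n-\Re(z)$ and $\Re(n+t-w)=n+t-\Re(w)$, and then perform the obvious algebraic rearrangement in each direction. The only cosmetic difference is that the paper phrases the manipulation via $\Re(w)=n+t-\Re(n+t-w)$ rather than cancelling $n$ directly, but the content is identical.
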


\begin{proof}
We note that the left inequalities are equivalent. Hence, we need to demonstrate that the right inequalities are also equivalent. Initially, we assume \eqref{E_equiv1}. From the right inequality and the existence of $ \LL_{[w],[n+t-w]} \inn \C^o(n+t,\CC_{\M}) $, we obtain
\begin{align*}
\Re(n-z)&<\Re(n+t-w)=n+t-\Re(w)\\
\Longrightarrow
\Re(w)&<n+t-\Re(n-z)=\Re(z)+t.
\end{align*}
This corresponds to the right side of \eqref{E_equiv2}. Conversely, if the right side of \eqref{E_equiv2} holds, we combine with $\Re(w)=n+t-\Re(n+t-w)$ and get
\begin{align*}
\Re(w)=n+t-\Re(n+t-w)&<\Re(z)+t=n-\Re(n-z)+t\\
\Longrightarrow \Re(n-z)&<\Re(n+t-w).
\end{align*}
This establishes the right inequality of \eqref{E_equiv1}.
\end{proof}
\bigskip


\begin{figure}[htbp]
\centering
   \includegraphics[width=240pt]{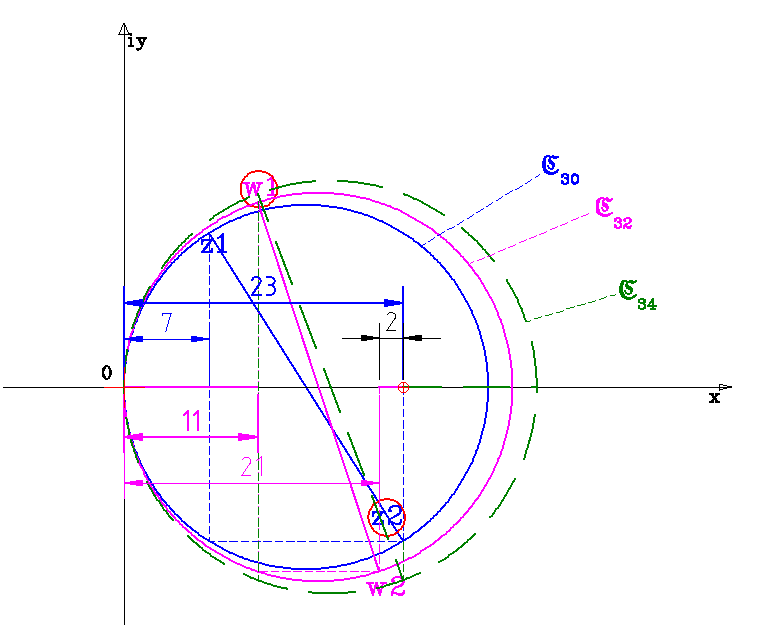}
   \caption{Forecasting of $\C^o(34,\CC_{\p})$ by $\C^o(30)$ and $\C^o(32)$} \label{F-forecasting}
\end{figure}

Here, we state some fundamental results concerning the existence of new nonempty cCoPs that depend on two known nonempty cCoPs.

\begin{theorem}[The expansion principles]\label{T-Expansion}
Let $\B\subset\M\subseteq\N$ and $n,t,s\in\mathbb{Q}\subseteq\N$. Furthermore, let $\C^o(n,\CC_{\M})$ and $\C^o(n+t,\CC_{\M})$
be two nonempty cCoPs with the axes 
\[
\LL_{[z],[n-z]}\inn\C^o(n,\CC_{\M})\mbox{ and }\LL_{[w],[n+t-w]} \inn \C^o(n+t,\CC_{\M}). 
\]
If 
\[
w,n-z\in\CC_{\B}\mbox{ and }\Re(w)=\Re(z)+s, 
\]
then the axis $\LL_{[u],[n+s-u]}\inn \C^o(n+s,\CC_{\B})$ exists with
\[
\Re(u)=\Re(w) \mbox{ and }\Re(n+s-u)=\Re(n-z)
\]
and the imaginary parts corresponding to the \emph{circle condition}. Thus, $\C^o(n+s,\CC_{\B})$ is a nonempty cCoP.
\end{theorem}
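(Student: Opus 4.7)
The plan is to build the point $u$ directly from the data of the two given axes and then verify that both $u$ and $n+s-u$ lie in $\CC_{\B}$ and satisfy the circle condition relative to $n+s$. The key observation is that the hypothesis $\Re(w) = \Re(z) + s$ is exactly what forces the two real weights $\Re(w)$ and $\Re(n-z)$ to sum to $n+s$: indeed $\Re(w) + \Re(n-z) = (\Re(z)+s) + (n-\Re(z)) = n+s$, so any candidate axis in $\C^o(n+s,\CC_{\B})$ with real weights $\Re(w)$ and $\Re(n-z)$ is automatically consistent with the axis relation.

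Guided by this, I would declare $\Re(u) := \Re(w)$ and let $\Im(u)$ be a (real) square root of the circle condition $\Im(u)^2 = \Re(u)(n+s - \Re(u))$. Its axis partner $n+s-u$ then has real weight $n+s-\Re(w) = \Re(n-z)$, as required, and imaginary weight $-\Im(u)$; the circle condition for $n+s-u$ follows automatically from the symmetry
\[
\Re(n+s-u)\bigl(n+s-\Re(n+s-u)\bigr) = \Re(u)\bigl(n+s-\Re(u)\bigr).
\]

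The remaining checks are just membership conditions. The hypothesis $w \in \CC_{\B}$ yields $\Re(u)=\Re(w)\in\B$, and $n-z\in\CC_{\B}$ yields $\Re(n+s-u)=\Re(n-z)\in\B$. For the imaginary part to be real, one needs $0<\Re(u)<n+s$; since $\Re(z)\in\N$ and $n-\Re(z)\in\B\subseteq\N$ force $0<\Re(z)<n$, we get $0 < \Re(u)=\Re(z)+s < n+s$, so $\Re(u)(n+s-\Re(u))>0$ and $\Im(u)$ is a genuine real number. Hence both $u$ and $n+s-u$ belong to $\CC_{\B}$ and satisfy the circle condition, so $[u],[n+s-u]\in\C^o(n+s,\CC_{\B})$ and $\LL_{[u],[n+s-u]}\inn\C^o(n+s,\CC_{\B})$.

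The main obstacle is not computational; it is purely bookkeeping — one must be vigilant not to conflate $\M$ with $\B$. The strengthening of the hypothesis from $w,n-z\in\CC_{\M}$ to $w,n-z\in\CC_{\B}$ is precisely the ingredient that lands the constructed axis inside $\C^o(n+s,\CC_{\B})$ rather than merely inside $\C^o(n+s,\CC_{\M})$. Having exhibited such an axis, the non-emptiness of $\C^o(n+s,\CC_{\B})$ is immediate.
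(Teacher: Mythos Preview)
Your proof is correct and follows essentially the same route as the paper: both hinge on the identity $\Re(w)+\Re(n-z)=n+s$, after which the axis in $\C^o(n+s,\CC_{\B})$ is built from the real weights $\Re(w)$ and $\Re(n-z)$ together with the circle condition. You compute that identity directly as $(\Re(z)+s)+(n-\Re(z))$, whereas the paper routes it through $\Re(n+t-w)=\Re(n-z)+t-s$, but this is only a cosmetic difference; your version is in fact slightly more careful, since you explicitly verify $0<\Re(u)<n+s$ so that the imaginary part is real.
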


\begin{proof}
Using the requirement $\Re(w)=\Re(z)+s$, we get
\begin{align}
n+t-\Re(n+t-w)&=n-\Re(n-z)+s\nonumber\\
\Re(n+t-w)&=\Re(n-z)+t-s.\label{E-Re(n-z)}
\end{align}
Since $w\in\CC_{\B}$ and $n-z\in\CC_{\B}$, we have (by \eqref{E-Re(n-z)})
\[
\Re(w)+\Re(n-z)=\Re(w)+\Re(n+t-w)-t+s=n+s.
\]
Hence, there exists an axis $\LL_{[u],[n+s-u]}\inn\C^o(n+s,\CC_{\B})$ with
\begin{align*}
\Re(u)&=\Re(w) \mbox{ and }\\
\Re(n+s-u)&=n+s-\Re(u)\\
&=n+s-\Re(w)\\
&=n+s-(\Re(z)+s)\\
&=n-\Re(z)=\Re(n-z)
\end{align*}
and the imaginary parts corresponding to the \textit{circle condition}. Thus, cCoP $\C^o(n+s,\CC_{\B})$ is not empty.
\end{proof}
\bigskip

Depending on the range of values of $s$, we can deduce the following:

\subsection{The equality principle}

If $s=t$, then the axis 
\[
\LL_{[w],[n+t-w]}\equiv\LL_{[u],[n+t-u]}\inn\C^o(n+t,\CC_{\M})
\] 
is an axis of $\C^o(n+t,\CC_{\B})$. Furthermore, by \eqref{E-Re(n-z)}, we deduce
\[
\Re(z)<\Re(w)\mbox{ and }\Re(n-z)=\Re(n+t-w).
\]

\subsection{The forecast principle}

If $s>t$, then we have determined from two known nonempty cCoP $\C^o(n,\CC_{\M})$ and $\C^o(n+t,\CC_{\M})$ a new nonempty cCoP $\C^o(n+s,\CC_{\B})$ with a generator $>n,n+t$. By \eqref{E-Re(n-z)}, we deduce
\[
\Re(z)<\Re(w)\mbox{ and }\Re(n-z)>\Re(n+t-w).
\]

\subsection{The squeeze principle}

If $0<s<t$, then we have determined from two known nonempty cCoP $\C^o(n,\CC_{\M})$ and $\C^o(n+t,\CC_{\M})$ a new nonempty cCoP $\C^o(n+s,\CC_{\B})$ with a generator $n+s$ satisfying $n<n+s<n+t$. By \eqref{E-Re(n-z)}, we deduce
\[
\Re(z)<\Re(w)\mbox{ and }\Re(n-z)<\Re(n+t-w)
\]
and by Lemma \ref{L-AxialPOP}
\[
\Re(z)<\Re(w)<\Re(z)+t.
\]
\bigskip


\begin{figure}[htbp]
\centering
   \includegraphics[width=240pt]{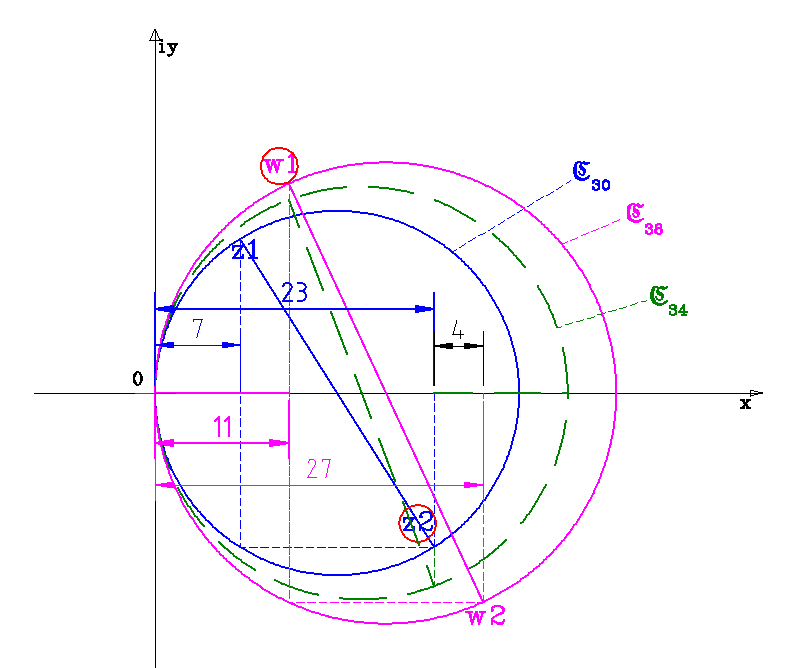}
   \caption{Squeezing of $\C^o(34,\CC_{\p})$ by $\C^o(30)$ and $\C^o(38)$ }
\label{F-squeeze}
\end{figure}

If we replace the general subset $\M$ by its parent set $\N$ and the subset $\B$ by the set of prime numbers $\p\subset\N$, we get the following consequences:

\begin{corollary}[The special expansion principles]
Let $n,t,s\in\mathbb{Q}\subseteq\N$, and let $\C^o(n)$ and $\C^o(n+t)$ be two nonempty cCoPs with the axes
\[
\LL_{[z],[n-z]}\inn\C^o(n)\mbox{ and } 
\LL_{[w],[n+t-w]} \inn \C^o(n+t) 
\]
with the base set $\CC_{\N}$\footnote{see \eqref{E-cCoP(N)}}.
If
\[
w,n-z\in\CC_{\p}\mbox{ and }\Re(w)=\Re(z)+s, 
\]
then the axis $\LL_{[u],[n+s-u]}\inn \C^o(n+s,\CC_{\p})$ exists with
\[
\Re(u)=\Re(w) \mbox{ and }\Re(n+s-u)=\Re(n-z)
\]
and the imaginary parts corresponding to the \textit{circle condition}. Thus, $\C^o(n+s,\CC_{\p})$ is a nonempty cCoP.
\end{corollary}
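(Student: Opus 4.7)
The plan is to apply Theorem \ref{T-Expansion} directly with the substitutions $\M \mapsto \N$ and $\B \mapsto \p$. Since $\p \subset \N \subseteq \N$, the required chain of inclusions $\B \subset \M \subseteq \N$ from the hypothesis of Theorem \ref{T-Expansion} is satisfied, so the substitution is legitimate.

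First I would invoke the convention \eqref{E-cCoP(N)}, namely $\C^o(n) = \C^o(n,\CC_{\N})$, to recast the two given cCoPs in the form appearing in Theorem \ref{T-Expansion}. Under this identification, the hypotheses that $\C^o(n)$ and $\C^o(n+t)$ are non-empty and carry the axes $\LL_{[z],[n-z]}$ and $\LL_{[w],[n+t-w]}$ match the theorem's hypotheses verbatim. The supplementary requirements $w, n-z \in \CC_{\p}$ and $\Re(w) = \Re(z) + s$ likewise coincide with the hypotheses $w, n-z \in \CC_{\B}$ and $\Re(w) = \Re(z) + s$ of the theorem after the substitution $\B \mapsto \p$.

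With every hypothesis verified, Theorem \ref{T-Expansion} produces an axis $\LL_{[u],[n+s-u]} \inn \C^o(n+s,\CC_{\B}) = \C^o(n+s,\CC_{\p})$ such that $\Re(u) = \Re(w)$ and $\Re(n+s-u) = \Re(n-z)$, with the imaginary parts determined by the circle condition. The existence of this axis immediately forces $\C^o(n+s,\CC_{\p})$ to be non-empty, which completes the argument.

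There is no genuine obstacle here: the corollary is exactly the instance $(\M,\B) = (\N,\p)$ of Theorem \ref{T-Expansion}. The sole point requiring any attention is the notational identification $\C^o(n) = \C^o(n,\CC_{\N})$, which is taken care of by definition \eqref{E-cCoP(N)}.
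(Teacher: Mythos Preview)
Your proposal is correct and matches the paper's approach exactly: the paper introduces the corollary with the remark ``If we set $\N$ instead of $\M$ and $\p$ instead of $\B$, we get the following corollary from the theorem above,'' and offers no further argument. Your verification that the substitution $(\M,\B)=(\N,\p)$ satisfies the hypothesis $\B\subset\M\subseteq\N$ and your use of the identification $\C^o(n)=\C^o(n,\CC_{\N})$ from \eqref{E-cCoP(N)} are precisely the points one needs to check.
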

\bigskip

\section{Limiting axes} 

In contrast to the previous section, we will treat an axis of a cCoP $\C^o(n,\CC_{\M})$ as $\LL_{[z_1],[z_2]}$ instead of $\LL_{[z],[n-z]}$. This is intended for the purposes of the current section, although both representations are valid, provided that $z,n-z\in \CC_{\M}$.\\

If a certain nonempty cCoP $\C^o(n,\CC_{\M})$ with an axis $\LL_{[z_1],[z_2]}$ is given, then for another nonempty cCoP $\C^o(n+t,\CC_{\M})$ a lower and an upper limiting axis  $\LL_{[u_1],[u_2]}$ (resp.  $\LL_{[v_1],[v_2]}$) for the validity of the \emph{expansion principles} can be analytically determined  (see Figure \ref{F-Limits}). The area ''squeeze'' marks the scope for an axis $\LL_{[w_1],[w_2]}$ for squeezing. Similarly, the area ''forecast'' marks the scope of $\LL_{[w_1],[w_2]}$ for forecasting. 


\begin{theorem}\label{T-Limits}
Let $\M\subseteq\N$ and $\C^o(n,\CC_{\M})$ and $\C^o(n+t,\CC_{\M})$ be nonempty cCoP with positive integers $n,t\in\mathbb{Q}\subseteq\N$. If there is an axis $\LL_{[z_1],[z_2]}$ of $\C^o(n,\CC_{\M})$, then there are--depending on these axis--a lower (A:) and an upper (B:) limiting axis $\LL_{[u_1],[u_2]}$ (resp.  $\LL_{[v_1],[v_2]}$) of the cCoP $\C^o(n+t,\CC_{\M})$ for their axes $\LL_{[w_1],[w_2]}$ with $\Re(w_1)>\Re(z_1)$ and
\begin{align}
\Re(u_1)&=\Re(z_1) \mbox{ and } \Re(v_2)=\Re(z_2)\label{E-z1u1v2z2}
\end{align}
such that
\begin{align}
A: \Re(u_1)<\Re(w_1)<\Re(v_1)\text{ and }B: \Re(v_2)<\Re(w_2)<\Re(u_2)\label{E-ForSqueeze}
\end{align}
for the Squeeze Principle and
\begin{align}
A: \Re(v_1)<\Re(w_1)<\frac{n+t}{2}\text{ and }B: \frac{n+t}{2}<\Re(w_2)<\Re(v_2)\label{E-ForForecast}
\end{align}
for the Forecast Principle.
\end{theorem}

\begin{proof}
Due to $w_1,z_2\in\CC_B$ and $\LL_{[w_1],[z_2]}\inn \C^o(n+s,\CC_B)$, we deduce $\Re(w_1)+\Re(z_2)=n+s$. Furthermore, we use the requirement of Theorem \ref{T-Expansion}: $\Re(w_1)=\Re(z_1)+s$, so that with \eqref{E-z1u1v2z2} and (\ref{E-ForSqueeze}.A), we get
\begin{align*}
\Re(u_1)&<\Re(w_1)<\Re(v_1)\\
\Re(z_1)&<\Re(w_1)<n+t-\Re(v_2)\\
\Re(z_1)&<\Re(w_1)<n+t-\Re(z_2)\mid +\Re(z_2)\\
\Re(z_1)+\Re(z_2)&<\Re(w_1)+\Re(z_2)<n+t\\
n&<n+s<n+t.
\end{align*} 
We deduce $0<s<t$, which implies the \emph{squeeze principle}.\\

Also, by \eqref{E-z1u1v2z2} and (\ref{E-ForSqueeze}.B), we get
\begin{align*}
\Re(v_2)&<\Re(w_2)<\Re(u_2)\\
\Re(z_2)&<n+t-\Re(w_1)<n+t-\Re(u_1)\mid +\Re(w_1)\\
\Re(z_2)+\Re(w_1)&<n+t<n+t-\Re(z_1)+\Re(w_1)\\
n+s&<n+t<n+t+s\text{, since }\Re(w_1)-\Re(z_1)=s\\
s&<t<t+s.
\end{align*}
We deduce $t>s$, which implies the \emph{squeeze principle}.\\

Again, by \eqref{E-z1u1v2z2} and (\ref{E-ForForecast}.A), we get
\begin{align*}
\Re(v_1)&<\Re(w_1)<\frac{n+t}{2}\\
n+t-\Re(v_2)&<\Re(w_1)<\frac{n+t}{2}\\
n+t-\Re(z_2)&<\Re(w_1)<\frac{n+t}{2}\mid +\Re(z_2) \\
n+t&<\Re(w_1)+\Re(z_2)<\frac{n+t}{2}+\Re(z_2)\\
n+t&<\Re(w_1)+\Re(z_2)<\frac{n+t}{2}+n\\
n+t&<n+s<\frac{n+t}{2}+n.
\end{align*}
We deduce $t<s<\frac{n+t}{2}$, which implies the \emph{forecast principle}.\\

Similarly, by \eqref{E-z1u1v2z2} and (\ref{E-ForForecast}.B), we get
\begin{align*}
\frac{n+t}{2}&<\Re(w_2)<\Re(v_2)\\
\frac{n+t}{2}&<n+t-\Re(w_1)<\Re(v_2)=\Re(z_2)\mid +\Re(w_1)\\
\frac{n+t}{2}+\Re(w_1)&<n+t<\Re(w_1)+\Re(z_2)\\
\frac{n+t}{2}&<n+t<n+s.
\end{align*}
We deduce $t<s$, which implies the \emph{forecast principle}. This completes the proof.
\end{proof}

\begin{figure}
   \includegraphics[width=240pt]{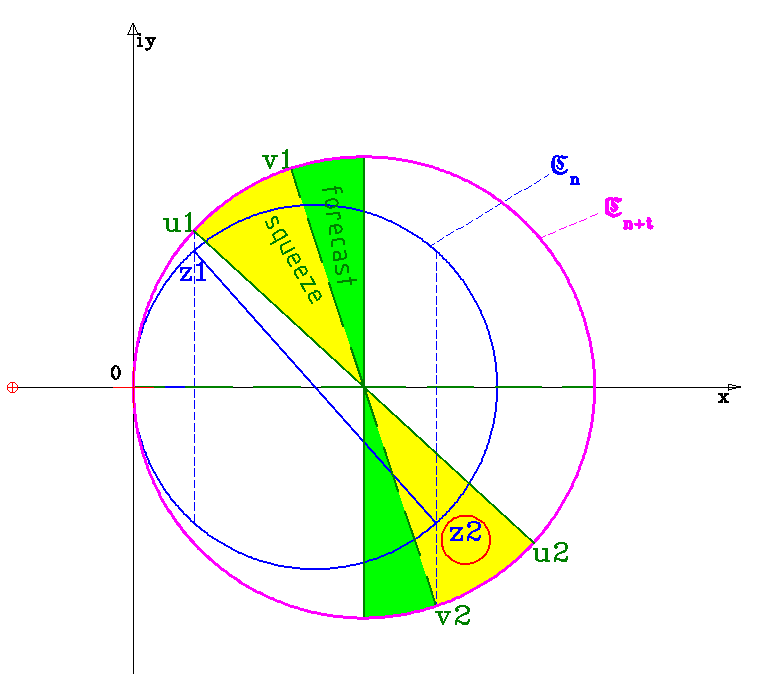}
   \caption{Limiting Axes for the Expansion Principles}\label{F-Limits} 
\end{figure}

\bigskip 

\section*{Conclusion}

The theory developed in this paper shows that the circle-of-partition framework admits a natural and structurally meaningful extension to the complex plane. By imposing the circle condition and working with complex circles of partition, we obtain a geometric model in which additive relations are no longer only algebraically represented but are also encoded through the geometry of embedding circles, axes, chords, and their associated interior and exterior regions. This passage from the real line to the complex plane preserves the additive content of the original construction while revealing a more flexible and visually informative setting in which phenomena of partitions may be analyzed.\\

A central achievement of the paper is the formulation of the expansion principles. The equality, forecast, and squeeze principles provide a systematic way to infer the existence of new admissible generators from previously known complex circles of partition. In particular, they allow one to compare nonempty cCoPs whose generators are related by order or alignment, and to deduce the presence of intermediate or larger generators under appropriate geometric hypotheses. This makes the framework useful not only as a descriptive geometric device, but also as an operative method for producing new partition information from existing data.\\

The analysis of limiting-axis further clarifies the geometry underlying these principles. It shows that the relevant comparison between cCoPs can be expressed in terms of asymptotic positions of their axes, providing a coherent interpretation of how admissible representations evolve as generators vary. In this way, the geometry of the embedding circle serves as more than a convenient illustration: it acts as an organizing principle for the entire theory.\\

The present work suggests several directions for further study. One natural direction is to investigate whether analogous constructions can be developed for higher-dimensional geometric models. Another is to examine how the expansion principles interact with density phenomena and various representation functions. More broadly, the complex setting introduced here indicates that the method of circle-of-partition has the potential to serve as a unifying geometric language for a wider range of additive problems.\\

In summary, the complex circle of partition provides a refined geometric framework for additive partition problems, and the expansion principles offer a robust mechanism for transferring information between admissible generators. The resulting theory strengthens the conceptual reach of circles of partition and opens the way to further structural and asymptotic investigations.


\noindent\rule{100pt}{1pt}

\bibliographystyle{amsplain}

\end{document}